\documentclass[]{article}
\usepackage{amsmath}
\usepackage{amsfonts}
\usepackage{amssymb}
\usepackage{amsthm}
\usepackage{mathrsfs}
\usepackage{enumerate}
\usepackage{tikz}
\usepackage{MnSymbol,faktor, systeme, bbm} 
\usepackage{array}
\usepackage[all]{xy}
\usepackage{subfig}
\usepackage{hyperref}

\usepackage{pgf}


\DeclareMathOperator{\Hom}{Hom}

\DeclareMathOperator{\Der}{Der}
\DeclareMathOperator{\Sym}{Sym}
\DeclareMathOperator{\Alt}{Alt}
\DeclareMathOperator{\Res}{Res}
\DeclareMathOperator{\Fix}{Fix}

\renewcommand{\divides}{\mid}

\newcommand{\integers}{\ensuremath{\mathbb{Z}}}
\newcommand{\Z}{\ensuremath{\mathbb{Z}}}
\newcommand{\rationals}{\ensuremath{\mathbb{Q}}}

\newcommand{\Gab}[2]{\ensuremath{G_{#1,#2}}}
\newcommand{\Gbar}{\ensuremath{\overline{G}}}

\newcommand{\mc}[1][n]{\ensuremath{\maxsubgr[#1]^c(\Gbar)}}

\newcommand{\maxsubgr}[1][n]{\ensuremath{m_{#1}}}
\newcommand{\subgr}[1][n]{\ensuremath{a_{#1}}}

\renewcommand{\hom}[1][n]{\ensuremath{h_{#1}}}
\newcommand{\trans}[1][n]{\ensuremath{t_{#1}}}
\newcommand{\prim}[1][n]{\ensuremath{p_{#1}}}

\newcommand{\normal}{\ensuremath{\trianglelefteq}}

\newtheorem{proposition}{Proposition}
\newtheorem{theorem}[proposition]{Theorem}
\newtheorem*{nntheorem}{Theorem}

\newtheorem{corollary}[proposition]{Corollary}
\newtheorem{lemma}[proposition]{Lemma}


\title{Subgroup growth of all Baumslag-Solitar groups}
\author{Andrew James Kelley}
\date{}

\begin{document}

\maketitle

\begin{abstract}
 This paper gives asymptotic formulas for the subgroup growth and maximal subgroup growth of all Baumslag-Solitar groups.
\end{abstract}

\section{Introduction}
For a finitely generated group $G$, let $\subgr(G)$ denote the number of subgroups of $G$ of index $n$, and let $\maxsubgr(G)$ denote
the number of maximal subgroups of $G$ of index $n$.
Also, for $a$, $b$ nonzero integers, let $\Gab{a}{b}$ denote the Baumslag-Solitar group $ \langle x, y \mid y^{-1}x^ay = x^b \rangle.$

In \cite{Gelman}, Gelman counts $\subgr(\Gab{a}{b})$ exactly for the case when $\gcd(a,b) = 1$.
Exact formulas in the area of subgroup growth are rare, and so his formula (Theorem~\ref{thm:Gelman} below) is indeed very nice.
Can a simple formula also be given for $\maxsubgr(\Gab{a}{b})$ when $\gcd(a,b) = 1$?  Yes, see Corollary~\ref{cor:maxsubgr(Gab) when gcd(a,b) = 1}.
Also, the question naturally
arises, what about the case when $\gcd(a,b) \neq 1$? 

From the work of Moldavanskii \cite{Moldavanskii}, it is apparent that the largest residually finite quotient of 
$\Gab{a}{b}$ is a group $\Gbar_{a,b}$ (which for simplicity will be denoted $\Gbar$)
which has a normal subgroup of the form $A \cong \integers[1/k]$ (for appropriate $k$) with 
$\Gbar/A \cong \integers * \integers/m\integers$, where $m = \gcd(a,b)$. When $m = 1$, this explains why the formula for 
$\subgr(\Gab{a}{b})$ is so simple; $\Gbar$ turns out to be of the form $\integers[1/k] \rtimes \integers$, and so
 Section~\ref{sec:redoing Gelman's formula} gives a more enlightening proof of Gelman's formula.
 
 When $m = \gcd(a,b) > 1$, one has to deal with the free product $\integers * \integers/m\integers$. 
 In \cite{Muller}, M\"uller studies such groups (and in fact many more: any free product of groups that are either finite or free).
 Combining this with the main result from Babai and Hayes' \cite{Babai-Hayes}, one can give an asymptotic formula for
 $\maxsubgr(\integers * \integers/m\integers)$ (and also for $\subgr(\integers * \integers/m\integers$)). Note that M\"uller's main results are even better 
 than asymptotic formulas.
 
 Next, a small argument shows that the vast majority of maximal subgroups (of any fixed, large index) of $\Gab{a}{b}$ contain the normal subgroup $A$
 (mentioned above), and hence, we obtain
 an asymptotic formula for $\maxsubgr(\Gab{a}{b})$. As it turns out, the vast majority of \emph{all} subgroups
 of $\Gab{a}{b}$ (of any fixed, large index) contain $A$, but it takes a little more work to show this. As a result, we can combine the two main results of this paper, 
 Theorems~\ref{thm:baumslag-solitar - maximal subgroup growth} and
 \ref{thm:baumslag-solitar - subgroup growth}, to obtain the following.
 \begin{nntheorem}
 	Let	$m = \gcd(a,b) > 1$. 	Then
 	\[
 	\subgr(\Gab{a}{b}) \sim \maxsubgr(\Gab{a}{b}) \sim K_m n^{(1-1/m)n + 1} \exp\left(-(1 - 1/m)n + \sum_{\substack{d < m \\ d|m}} \frac{n^{d/m}}{d} \right),
 	\]
 	where
 	\[
 	K_m :=
 	\begin{cases}
 	m^{-1/2} & \text{if $m$ is odd}\\
 	m^{-1/2}e^{-1/(2m)} & \text{otherwise.}
 	\end{cases}
 	\]
 \end{nntheorem}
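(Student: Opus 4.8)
The plan is to derive this combined asymptotic from the two advertised main theorems of the paper, Theorems~\ref{thm:baumslag-solitar - maximal subgroup growth} and \ref{thm:baumslag-solitar - subgroup growth}. The key structural input, as the introduction explains, is that the largest residually finite quotient $\Gbar$ of $\Gab{a}{b}$ fits into an extension $1 \to A \to \Gbar \to \integers * \integers/m\integers \to 1$ with $A \cong \integers[1/k]$, and that for a \emph{fixed large} index $n$, the overwhelming majority of (maximal) subgroups of index $n$ contain $A$. Since subgroups of index $n$ in $\Gab{a}{b}$ correspond to subgroups of index $n$ in $\Gbar$, those containing $A$ correspond exactly to index-$n$ subgroups of the quotient $\integers * \integers/m\integers$. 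Hence both $\subgr(\Gab{a}{b})$ and $\maxsubgr(\Gab{a}{b})$ are asymptotically governed by the corresponding counts for the free product $\integers * \integers/m\integers$, which are obtained by feeding M\"uller's enumeration \cite{Muller} into the Babai--Hayes maximal-subgroup result \cite{Babai-Hayes}.

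First I would record the asymptotic formula for $\subgr(\integers * \integers/m\integers)$ coming from M\"uller's work; this is the quantity whose leading behavior is the right-hand side of the claimed formula. The standard route is to analyze the number of index-$n$ subgroups of a free product via its relation to the number of index-$n$ permutation representations (the homomorphism count $\hom[n]$), normalize by the transitive ones $\trans[n]$, and then extract asymptotics. For a free product of the shape $\integers * \integers/m\integers$ the exponential generating function factors, and a saddle-point or transfer argument yields a main term of the form $n^{(1-1/m)n+1}\exp\!\left(-(1-1/m)n + \sum_{d|m,\, d<m} n^{d/m}/d\right)$ up to the constant $K_m$. The two cases in $K_m$ (with the extra factor $e^{-1/(2m)}$ when $m$ is even) arise from the second-order term in the expansion of the relevant generating function: when $m$ is even there is an additional contribution from elements of order $2$ in $\integers/m\integers$, which shifts the subexponential constant by exactly $e^{-1/(2m)}$.

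Next I would invoke Theorem~\ref{thm:baumslag-solitar - maximal subgroup growth} to conclude that $\maxsubgr(\Gab{a}{b})$ is asymptotic to the same expression, and Theorem~\ref{thm:baumslag-solitar - subgroup growth} to conclude the same for $\subgr(\Gab{a}{b})$. The final step is purely formal: since both sequences are asymptotic to the identical closed-form expression $K_m\, n^{(1-1/m)n+1}\exp\!\left(-(1-1/m)n + \sum_{d|m,\,d<m} n^{d/m}/d\right)$, transitivity of $\sim$ gives $\subgr(\Gab{a}{b}) \sim \maxsubgr(\Gab{a}{b})$ and identifies their common asymptotic. In particular, the displayed theorem is essentially a restatement packaging the two main theorems together, and the only real content beyond citing them is verifying that the two theorems produce the \emph{same} leading constant and exponent.

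The main obstacle I anticipate is the passage ``the vast majority of subgroups of index $n$ contain $A$,'' which underlies both main theorems and is where the free-product count is allowed to dominate. For maximal subgroups this is the ``small argument'' mentioned in the introduction: a maximal subgroup either contains the normal subgroup $A$ or its image is trivial/small, and one bounds the number of the latter. For arbitrary subgroups it is harder, since a subgroup of index $n$ need not interact with $A$ in a controlled way; one must show that the count of index-$n$ subgroups failing to contain $A$ grows strictly slower than the $n^{(1-1/m)n}$-type main term, so that it is absorbed into the error. Assuming Theorems~\ref{thm:baumslag-solitar - maximal subgroup growth} and \ref{thm:baumslag-solitar - subgroup growth} as stated in the excerpt, however, this work is already done, and the proof of the combined statement reduces to matching the two asymptotic formulas and appealing to transitivity of $\sim$.
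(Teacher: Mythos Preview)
Your proposal is correct and matches the paper's approach: the combined statement is simply the conjunction of Theorems~\ref{thm:baumslag-solitar - maximal subgroup growth} and \ref{thm:baumslag-solitar - subgroup growth}, and the paper's proof (given as the final Corollary) is exactly the transitivity-of-$\sim$ argument you describe. The additional discussion you include about M\"uller's formula and the ``most subgroups contain $A$'' step accurately reflects what goes into those two theorems, but for the combined statement itself the paper, like you, just cites them.
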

This formula is based on formula (8) from \cite{Muller}.

All of the work in this paper except for Section \ref{sec:when gcd(a,b) > 1, formula for subgr(Gab)}, was completed while the author was
a graduate student at Binghamton University. Hence, most of this paper is from \cite{kelley-dissertation}, the author's dissertation.

For related work on the Baumslag-Solitar groups, note that in \cite{Button}, Button gives an exact formula for counting the normal subgroups of any index in
$\Gab{a}{b}$, when $\gcd(a,b) = 1$. For a survey of subgroup growth up until 2003, see \cite{Lubotzky-and-Segal}, the book by Lubotzky and Segal.

The notation used here is standard. The number of primitive permutation representations of $G$ of degree $n$ is denoted $\prim(G)$, and
the number of transitive permutation representations of $G$ of degree $n$ is denoted $\trans(G)$. If a group $G$ acts on a group $N$, the set
of derivations (1-cocycles) from $G$ do $N$ is denoted $\Der(G,N)$.

The goal of Section \ref{sec:largest residually finite quotient} is to describe $\Gbar$, the largest residually finite quotient of $\Gab{a}{b}$.
In Section~\ref{sec:redoing Gelman's formula}, a new proof is given for Gelman's formula, and it is shown
there what it simplifies to for $\maxsubgr(\Gab{a}{b})$. In Section~\ref{sec:when gcd(a,b) > 1, a formula for maxsubgr(Gab)}, an asymptotic
formula is given for $\maxsubgr(\Gab{a}{b})$ when $\gcd(a,b) > 1$. Finally, in Section~\ref{sec:when gcd(a,b) > 1, formula for subgr(Gab)},
it is shown that the asymptotic formula for $\maxsubgr(\Gab{a}{b})$ is also asymptotic to $\subgr(\Gab{a}{b})$ (where still $\gcd(a,b) > 1$).

\section{The largest residually finite quotient}
\label{sec:largest residually finite quotient}
The goal of this section is Corollary \ref{cor:Gbar - short exact sequence - with stated action}.

We will denote the intersection of all finite index subgroups of $\Gab{a}{b}$ by $\Res(\Gab{a}{b})$. 
In \cite{Moldavanskii}, Moldavanskii determines what $\Res(\Gab{a}{b})$ is.
Let $d := \gcd(a,b)$.

\begin{theorem}[Moldavanskii, 2010]
	\label{thm:Moldavanskii on Res(Baumslag-Solitar)}
	The group $\Res(\Gab{a}{b})$ is the normal closure in \Gab{a}{b} of the set of commutators
	$\{[y^{k}x^dy^{-k},x] : k \in \integers \}.$
\end{theorem}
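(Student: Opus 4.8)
The plan is to prove the two inclusions $N\subseteq\Res(\Gab{a}{b})$ and $\Res(\Gab{a}{b})\subseteq N$ separately, where $N$ denotes the normal closure in $G:=\Gab{a}{b}$ of the set $\{[y^{k}x^{d}y^{-k},x]:k\in\integers\}$. Throughout, write $a=da'$ and $b=db'$ with $\gcd(a',b')=1$.

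For the inclusion $N\subseteq\Res(G)$, I would show that every homomorphism $\rho\colon G\to Q$ with $Q$ finite kills $N$; since $N$ is a normal closure it suffices to show $\rho(x)$ commutes with each $\rho(y^{k}x^{d}y^{-k})$. Put $u=\rho(x)$, $v=\rho(y)$ and $n=\mathrm{ord}(u)$. Because $v^{-1}u^{a}v=u^{b}$ are conjugate, they have equal order, forcing $\gcd(a,n)=\gcd(b,n)$. A short $p$-adic valuation computation then shows that for every prime $p\mid a'b'$ one must have $v_{p}(n)\le v_{p}(d)$, and hence that $s:=\mathrm{ord}(u^{d})=n/\gcd(d,n)$ is coprime to $a'b'$. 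Writing $z=u^{d}$, the relation gives $(v^{-1}zv)^{a'}=z^{b'}$; since $\gcd(a',s)=1$, the element $v^{-1}zv$ is a power of $z^{b'}$ and therefore lies in $\langle u\rangle$. The symmetric argument with $b'$ in place of $a'$ handles $vzv^{-1}$, so every conjugate $v^{k}u^{d}v^{-k}$ lies in the abelian group $\langle u\rangle$ and commutes with $u$. Thus $\rho(N)=1$, and as $\rho$ was arbitrary, $N\subseteq\Res(G)$.

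For the reverse inclusion I would establish that $G/N$ is residually finite, which gives $\Res(G)\subseteq N$. First I would determine the structure of $G/N$. Because $x$ commutes with every $y^{k}x^{d}y^{-k}$ modulo $N$, conjugation by $x$ fixes these elements, so the image $A$ of the normal closure of $x^{d}$ is generated by the commuting elements $y^{k}x^{d}y^{-k}$ and is therefore abelian; moreover $(G/N)/A\cong\integers*\integers/d\integers$, since killing $x^{d}$ trivializes the defining relation. Using the Bass--Serre/HNN description of the normal closure of $x^{d}$ together with the relation $y^{-1}(x^{d})^{a'}y=(x^{d})^{b'}$ (which forces conjugation by $y$ to act as multiplication by $b'/a'$), I would identify $A\cong\integers[1/(a'b')]$, recovering the exact sequence of Corollary~\ref{cor:Gbar - short exact sequence - with stated action}.

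Residual finiteness of $G/N$ I would then deduce by combining the residual finiteness of the virtually free group $\integers*\integers/d\integers$ with the congruence quotients $\integers[1/(a'b')]\to\integers/p^{r}\integers$ for primes $p\nmid a'b'$: on each such quotient multiplication by $b'/a'$ is well defined and invertible, so the $y$-action descends, and the resulting finite quotients of $G/N$ separate the nontrivial elements of $A$ while the free-product quotient separates the rest. I expect the main obstacle to be exactly this direction: verifying that $A$ is precisely $\integers[1/(a'b')]$ with no further collapsing inside the normal closure, and assembling enough compatible finite quotients of the extension $1\to A\to G/N\to\integers*\integers/d\integers\to 1$ to separate every nontrivial element, since the extension is not split in a way that would make residual finiteness automatic.
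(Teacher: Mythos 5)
The paper does not actually prove this theorem: it is quoted verbatim from Moldavanskii's paper and used as a black box, so there is no internal proof to compare against. Judged on its own, your first direction ($N\subseteq\Res(\Gab{a}{b})$) is correct and essentially complete. The valuation computation works: $\gcd(a,n)=\gcd(b,n)$ together with $\gcd(a',b')=1$ forces $v_p(n)\le v_p(d)$ for every prime $p\mid a'b'$, hence $s=\mathrm{ord}(u^d)$ is coprime to $a'b'$, and the identity $(v^{-1}zv)^{a'}=z^{b'}$ with $\gcd(a',s)=1$ gives $\langle v^{-1}zv\rangle=\langle z^{b'}\rangle\le\langle z\rangle$; the symmetric computation and an easy induction on $|k|$ put every $v^kzv^{-k}$ inside the cyclic group $\langle u\rangle$, so every finite quotient kills $N$. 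This is a clean, self-contained argument.

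The second direction is the right strategy (show $G/N$ is residually finite) but, as you anticipate, it contains two genuine gaps. First, the identification of $A$ with $\integers[1/(a'b')]$ ``with no further collapsing'' is the technical heart of Moldavanskii's Propositions 3 and 4: you need to show that the only relations among the elements $y^kx^dy^{-k}$ in $G/N$ are the abelianizations of $u c_{i}=v c_{i+1}$, which requires either Britton's lemma/normal forms in the HNN extension or the construction of an explicit model group (e.g.\ a suitable quotient of $\integers[1/(a'b')]\rtimes F_2$ identifying $x^d$ with $1\in\integers[1/(a'b')]$) mapping onto $G/N$. Naming the Bass--Serre description does not yet execute this step. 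Second, reducing $A$ modulo $p^r$ for $p\nmid a'b'$ does \emph{not} directly produce finite quotients of $G/N$: the quotient $(G/N)/p^rA$ is an extension of the infinite group $\integers*\integers/d\integers$ by $\integers/p^r\integers$, so you still must argue that this finite-by-(virtually free) group is residually finite (e.g.\ it is finitely generated virtually free by Karrass--Pietrowski--Solitar, or one splits off the finite kernel over a finite-index free subgroup using $H^2(F;\cdot)=0$). Your phrase ``the resulting finite quotients of $G/N$'' elides exactly this step. Both gaps are fillable by standard arguments, but they are where the real work of Moldavanskii's theorem lives.
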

Let $\Gbar = \Gab{a}{b}/\Res(\Gab{a}{b})$ 
the largest residually finite quotient of $\Gab{a}{b}$. ($\Gbar$ does depend on $a$ and $b$.)
We then have the following presentation of $\Gbar$:
\[
\Gbar = \langle x, y \mid y^{-1}x^ay = x^b, [y^{k}x^dy^{-k},x] \text{ for all } k \in \integers \rangle.
\]

We next define a subgroup of $\Gbar$ (denoted $\overline{C}$ in \cite{Moldavanskii}):
\[
A := \langle y^kx^dy^{-k} : k \in \integers \rangle \leq \Gbar.
\]
\begin{lemma}[Moldavanskii]
	\label{lem:Gab has A as an abelian normal subgroup}
	The group $A$ is an abelian normal subgroup of $\Gbar$.
\end{lemma}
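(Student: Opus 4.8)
The plan is to verify directly from the presentation of $\Gbar$ that $A$ is closed under conjugation by the generators and that its generators pairwise commute. Write $u_k := y^k x^d y^{-k}$ for the generators of $A$, so that $A = \langle u_k : k \in \integers \rangle$, and note that the defining relations of $\Gbar$ include $[u_k, x] = 1$ for every $k \in \integers$.

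For normality, since $A$ is generated by the $u_k$, it suffices to check that conjugation by $x^{\pm 1}$ and $y^{\pm 1}$ sends each $u_k$ back into $A$. Conjugation by $y^{\pm 1}$ is immediate: $y u_k y^{-1} = u_{k+1}$ and $y^{-1} u_k y = u_{k-1}$, so $y^{\pm 1}$ merely permutes the generating set. Conjugation by $x^{\pm 1}$ is handled by the relations themselves: the relation $[u_k, x] = 1$ says precisely that $u_k$ commutes with $x$, hence $x u_k x^{-1} = u_k$ and likewise $x^{-1} u_k x = u_k$, so $x^{\pm 1}$ fixes each $u_k$. Thus $A$ is invariant under conjugation by all generators and their inverses, and therefore $A \normal \Gbar$.

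For the claim that $A$ is abelian, I would first record the base case that $u_0 = x^d$ commutes with every $u_\ell$: since $u_\ell$ commutes with $x$ (by the relation $[u_\ell, x] = 1$), it commutes with every power of $x$, in particular with $x^d = u_0$. To reduce the general commutator $[u_k, u_\ell]$ to this base case, I would conjugate by $y^{-k}$ and use the identity $y^{-k} u_j y^{k} = u_{j-k}$, which gives $y^{-k} [u_k, u_\ell] y^{k} = [u_0, u_{\ell - k}] = 1$. Since conjugation is an automorphism, it follows that $[u_k, u_\ell] = 1$ for all $k, \ell \in \integers$, so the generators of $A$ pairwise commute and $A$ is abelian.

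The computations are all short; the only point requiring a moment of thought is the abelian part, where one must notice that commuting with $x$ upgrades to commuting with $x^d$, and then exploit the $y$-conjugation action $u_j \mapsto u_{j \pm 1}$ to shift an arbitrary pair of generators to one involving $u_0$. I do not anticipate a genuine obstacle beyond keeping the conjugations bookkept correctly.
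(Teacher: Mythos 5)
Your proposal is correct and follows essentially the same route as the paper's proof: normality via $y$-conjugation shifting the generators and the relations $[y^kx^dy^{-k},x]=1$ handling $x$-conjugation, and commutativity by upgrading ``commutes with $x$'' to ``commutes with $x^d$'' and then shifting by conjugation with a power of $y$. You merely spell out the conjugation bookkeeping that the paper's ``hence'' leaves implicit.
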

Note: This is a small part of Propositions 3 and 4 in \cite{Moldavanskii}.
\begin{proof}
	We have $A \normal \Gbar$ because conjugating the generators of $A$ by $y$ just shifts them and because 
	$x$ commutes with all the generators (because of the commutators in $\Res(\Gab{a}{b})$).
	
	We have that $[y^{k}x^dy^{-k},x] \in \Res(\Gab{a}{b})$ implies that 
	$x^d$ commutes with $y^{k}x^dy^{-k}$, and hence for all $j, k \in \integers$ we get
	$[y^{k}x^dy^{-k},y^{j}x^dy^{-j}] \in \Res(\Gab{a}{b})$.
\end{proof}
It turns out that $\Gbar/A$ is the free product of a finite cyclic group with the infinite cyclic group:
(Recall that $d := \gcd(a,b)$.)
\begin{corollary}[Moldavanskii]
	\label{cor:Gbar/A is a free product}
	The group $\Gbar/A$ has presentation $\langle x, y \mid x^d \rangle$, and therefore, $\Gbar/A \cong (\integers/d\integers) * \integers$.
\end{corollary}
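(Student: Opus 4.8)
The plan is to read off a presentation for $\Gbar/A$ from the presentation of $\Gbar$ displayed above and then to show that all but one of the relators are redundant. The crucial preliminary observation is that $A$ is not merely a subgroup but is exactly the normal closure of the single element $x^d$ in $\Gbar$. This is where Lemma~\ref{lem:Gab has A as an abelian normal subgroup} is used: the normal closure of $x^d$ must contain every conjugate $y^k x^d y^{-k}$ and hence all of $A$, while conversely $A$ is normal and contains $x^d$ (the $k = 0$ generator) and so contains the normal closure of $x^d$. Granting this, $\Gbar/A$ is obtained from the presentation of $\Gbar$ by adjoining the single relation $x^d = 1$.

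I would then simplify
\[
\langle x, y \mid y^{-1}x^ay = x^b,\ [y^{k}x^dy^{-k},x]\ (\text{all } k \in \integers),\ x^d \rangle
\]
by deleting redundant relators. Once $x^d = 1$ holds, each commutator relator reduces to $[y^k \cdot 1 \cdot y^{-k}, x] = [1,x] = 1$, so every commutator relation is automatically satisfied. Moreover, writing $a = d a'$ and $b = d b'$, which is possible because $d = \gcd(a,b)$, the relation $x^d = 1$ gives $x^a = (x^d)^{a'} = 1$ and $x^b = (x^d)^{b'} = 1$; hence the Baumslag--Solitar relation $y^{-1}x^ay = x^b$ collapses to the trivial identity and may also be dropped.

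This leaves the presentation $\langle x, y \mid x^d \rangle$, which is the standard presentation of the free product of $\langle x \mid x^d \rangle \cong \integers/d\integers$ with $\langle y \rangle \cong \integers$; that is, $\Gbar/A \cong (\integers/d\integers) * \integers$. I do not anticipate any real difficulty in carrying this out. The only step needing genuine care is the identification of $A$ with the normal closure of $x^d$, precisely because it relies on $A$ being normal rather than just a subgroup generated by conjugates of $x^d$.
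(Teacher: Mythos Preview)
Your proposal is correct and follows essentially the same route as the paper: adjoin the relation $x^d = 1$ to the presentation of $\Gbar$ and check that the commutator relations and the Baumslag--Solitar relation then become trivial. Your explicit identification of $A$ with the normal closure of $x^d$ (via Lemma~\ref{lem:Gab has A as an abelian normal subgroup}) makes precise a step the paper leaves implicit, but otherwise the arguments coincide.
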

Note: The $x$ here does indeed correspond to the $x$ in the presentation of $\Gbar$.
\begin{proof}
	Any group with the relation $x^d = 1$ also has the relations $y^kx^dy^{-k} = 1$ as well as $[y^kx^dy^{-k}, x] = 1$, and since
	$d$ divides $a$ and $b$, that group will also have the relation $y^{-1}x^{a}y = x^b$. 
	Therefore, $\Gbar/A$ has presentation $\langle x, y \mid x^d \rangle$.
\end{proof}
Since we know that $A$ is abelian, we will write $A$ additively instead of multiplicatively. 
Let $a = ud$, $b = vd$. (So $\gcd(u,v) = 1$.)
\begin{proposition}[Moldavanskii]
	The group $A$ has the following presentation as an abelian group (using additive notation)
	\[
	A = \langle c_i, i \in \integers \mid uc_i = vc_{i+1} \text{ for all } i \in \integers \rangle.
	\]
\end{proposition}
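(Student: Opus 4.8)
The plan is to realize the proposed presentation as a concrete rank-one torsion-free abelian group and then exhibit enough homomorphisms to force the natural map onto $A$ to be an isomorphism. Write $B$ for the abstract abelian group $\langle c_i, i\in\integers \mid uc_i = vc_{i+1}\rangle$, and let $\phi\colon B\to A$ be the homomorphism determined by $\phi(c_i)=y^ix^dy^{-i}$. First I would check that $\phi$ is well defined, i.e.\ that the defining relations of $B$ hold in $A$: writing $A$ additively, $uc_i$ corresponds to $y^ix^{du}y^{-i}=y^ix^ay^{-i}$ and $vc_{i+1}$ to $y^{i+1}x^{dv}y^{-i-1}=y^{i+1}x^by^{-i-1}$, and these agree because $y^{-1}x^ay=x^b$ gives $x^a=yx^by^{-1}$, whence $y^ix^ay^{-i}=y^{i+1}x^by^{-i-1}$. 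Since $A$ is generated by the elements $y^ix^dy^{-i}=\phi(c_i)$, the map $\phi$ is surjective; thus the entire content of the proposition is that $\phi$ is \emph{injective}.

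Next I would pin down $B$ itself. Consider $\psi\colon B\to\rationals$ defined by $\psi(c_i)=(u/v)^i$; this respects the relations since $u(u/v)^i=v(u/v)^{i+1}$, so $\psi$ is a well-defined homomorphism whose image is the subgroup $H\le\rationals$ generated by all $(u/v)^i$. Because $\gcd(u,v)=1$, choosing $s,t$ with $su+tv=1$ shows that $1/u=s+t(v/u)$ and $1/v=t+s(u/v)$ lie in $H$, so $H=\integers[1/(uv)]$. To see that $\psi$ is injective I would filter $B$ by the finitely generated subgroups $B_N=\langle c_{-N},\dots,c_N\rangle$: each $B_N$ is a quotient of the group presented by the consecutive relations $uc_i=vc_{i+1}$ for $-N\le i<N$, whose presentation matrix is bidiagonal with entries $u$ and $-v$. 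Since $\gcd(u,v)=1$, the gcd of the maximal minors (which include $\pm u^{2N}$ and $\pm v^{2N}$) is $1$, so Smith normal form gives an infinite cyclic cokernel. Thus $B_N$ is a quotient of $\integers$ that $\psi$ maps onto $\frac{1}{u^Nv^N}\integers\cong\integers$, which forces $\psi|_{B_N}$ to be an isomorphism. As $B=\bigcup_N B_N$, the map $\psi$ is an isomorphism $B\xrightarrow{\ \sim\ }\integers[1/(uv)]$.

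The key remaining step, and the one I expect to be the main obstacle, is to produce a homomorphism $\rho\colon A\to\rationals$ compatible with $\psi$ \emph{without} assuming a presentation of $A$ (which would be circular). For this I would build the target externally. Since $u$ and $v$ are units in $\integers[1/(uv)]$, multiplication by $u/v$ is an automorphism, so I can form $G'=\integers[1/(uv)]\rtimes\integers$ with the generator of $\integers$ acting by $u/v$, and define $f\colon\Gab{a}{b}\to G'$ by sending $x$ to $1\in\integers[1/(uv)]$ and $y$ to the generator of $\integers$. The single defining relation is respected: in $G'$ conjugation by $y^{-1}$ scales $\integers[1/(uv)]$ by $v/u$, so $y^{-1}x^ay$ maps to $(v/u)a=(v/u)(ud)=vd=b$, the image of $x^b$. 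Being a finitely generated metabelian group, $G'$ is residually finite, so $f$ annihilates $\Res(\Gab{a}{b})$ and descends to $\bar f\colon\Gbar\to G'$. Restricting to $A$ gives $\bar f(c_i)=\bar f(y^ix^dy^{-i})=(u/v)^i d$, which lies in the abelian normal factor $\integers[1/(uv)]$; calling this restriction $\rho$, we obtain $\rho\circ\phi=d\,\psi$. Since $\psi$ is injective and $d\neq 0$, the composite $\rho\circ\phi$ is injective, hence so is $\phi$. Together with surjectivity this shows $\phi$ is an isomorphism, which is exactly the claimed presentation of $A$.
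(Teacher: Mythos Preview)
Your proof is correct. Note, however, that the paper does not itself prove this proposition: it is stated with attribution to Moldavanskii and no argument is given. The paper then \emph{uses} the presentation as input to the subsequent lemma, which shows $A\cong\integers[u/v,v/u]$ via $c_i\mapsto(u/v)^i$.

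Your argument in effect proves both results at once. The identification of the abstract group $B$ with $\integers[1/(uv)]$ via $\psi(c_i)=(u/v)^i$ closely parallels the paper's proof of that lemma; you establish injectivity of $\psi$ by Smith normal form on the truncations $B_N$, whereas the paper uses a direct term-reduction argument on a putative kernel element. The genuinely new ingredient you supply---which the paper sidesteps by outsourcing the proposition to Moldavanskii---is the external homomorphism $f\colon\Gab{a}{b}\to\integers[1/(uv)]\rtimes\integers$ together with the appeal to P.~Hall's theorem that finitely generated metabelian groups are residually finite. This lets $f$ descend to $\Gbar$ and produces the map $\rho$ out of $A$ needed to certify that $\phi$ is injective without assuming any presentation of $A$ in advance. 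It is a clean, self-contained route that avoids circularity exactly where one would worry about it.
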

Moldavanskii also shows in Proposition 4 of \cite{Moldavanskii} that $A$ is
a residually finite abelian group of rank 1. (For $A$ to have rank 1 means that all of its finitely generated
subgroups are cyclic.) We will show in Lemma \ref{lem:Z[u/v, v/u] iso_to A - Baumslag-Solitar} something similar, that $A$
is isomorphic to $\integers[u/v, v/u] = \{a_1(u/v)^{t_1} + \cdots + a_k(u/v)^{t_k} : a_i, t_i \in \integers \text{ } \forall i \}$.
We remind the reader that 
the ring $\integers[u/v, v/u]$ is $\integers$ together with the two rational numbers $u/v$ and $v/u$ adjoined.
See Lemma \ref{lem:Z[1/pi(uv)] iso_to Z[1/uv] iso_to Z[u/v, v/u]} below for a well-known alternative perspective.

We let $\pi(uv)$ denote the product of the distinct primes that divide $uv$.
\begin{lemma}
	\label{lem:Z[1/pi(uv)] iso_to Z[1/uv] iso_to Z[u/v, v/u]}
	Assume still that $\gcd(u,v) = 1$. As subrings of $\rationals$, we have
	$$\integers[v/u, u/v] = \integers[1/u, 1/v] = \integers[1/(uv)] = \integers[1/\pi(uv)]$$
\end{lemma}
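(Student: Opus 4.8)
The plan is to establish the three equalities in the chain by a sequence of elementary mutual inclusions, each amounting to writing the generators of one ring as integer-coefficient expressions in the generators of the other. Since every ring in the chain is a subring of $\rationals$ containing $\integers$, it suffices in each case to check that the adjoined generators lie in the target ring. Conceptually the whole statement is the assertion that localizing $\integers$ depends only on the set of primes inverted, but I would carry it out by hand.

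First I would handle the leftmost equality $\integers[v/u, u/v] = \integers[1/u, 1/v]$. The inclusion $\subseteq$ is immediate, since $v/u = v \cdot (1/u)$ and $u/v = u \cdot (1/v)$ both lie in $\integers[1/u,1/v]$. The reverse inclusion is the only place where the hypothesis $\gcd(u,v) = 1$ is genuinely used: by B\'ezout there are integers $s,t$ with $su + tv = 1$, and dividing this relation by $u$ and by $v$ respectively gives
\[
\frac{1}{u} = s + t\,\frac{v}{u}, \qquad \frac{1}{v} = s\,\frac{u}{v} + t,
\]
both of which visibly lie in $\integers[v/u, u/v]$.

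Next I would prove $\integers[1/u, 1/v] = \integers[1/(uv)]$: here $\subseteq$ follows from $1/u = v \cdot (1/(uv))$ and $1/v = u \cdot (1/(uv))$, while $\supseteq$ follows from $1/(uv) = (1/u)(1/v)$. Finally, for $\integers[1/(uv)] = \integers[1/\pi(uv)]$, set $N = uv$ and write its radical as $\pi(N) = p_1 \cdots p_r$. Since $\pi(N) \divides N$ we get $1/\pi(N) \in \integers[1/N]$, giving $\supseteq$; conversely $N \divides \pi(N)^e$ once $e$ is at least the largest exponent appearing in the prime factorization of $N$, so $1/N$ is an integer multiple of $(1/\pi(N))^e$, giving $\subseteq$.

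The computations are all routine; the one conceptual point --- and the main obstacle, in the sense that this is the only place coprimality is indispensable --- is the B\'ezout step in the reverse inclusion of the first equality. It is worth noting that this equality genuinely fails without coprimality: for $u = v = 2$ the left-hand ring is just $\integers$, whereas $\integers[1/u, 1/v] = \integers[1/2]$.
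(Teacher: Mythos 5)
Your proof is correct. The paper itself gives no argument for this lemma --- it simply declares it well-known --- so there is nothing to compare against; your write-up is the standard verification one would supply. Each of the six inclusions is checked correctly, you rightly isolate the B\'ezout step (writing $1/u = s + t\,(v/u)$ and $1/v = s\,(u/v) + t$ from $su + tv = 1$) as the only place where $\gcd(u,v)=1$ is used, and the remark that the first equality fails for $u = v = 2$ is a worthwhile sanity check. The argument also survives the edge cases implicit in the paper's setup ($u$ or $v$ negative, or equal to $\pm 1$, in which case $\pi(uv)$ is an empty product), since $\integers[1/k] = \integers[1/{\lvert k \rvert}]$ throughout.
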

Lemma \ref{lem:Z[1/pi(uv)] iso_to Z[1/uv] iso_to Z[u/v, v/u]} is well-known.

\begin{lemma}
	\label{lem:Z[u/v, v/u] iso_to A - Baumslag-Solitar}
	We have that $A \cong \integers[u/v, v/u]$ as groups.
\end{lemma}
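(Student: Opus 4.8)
The plan is to construct an explicit homomorphism and prove it is an isomorphism, using the coprimality of $u$ and $v$ in an essential way. First I would define $\varphi \colon A \to \mathbb{Z}[u/v, v/u] \subseteq \mathbb{Q}$ on generators by $\varphi(c_i) = (u/v)^i$. To see this is well defined it suffices to check that it kills the defining relations, and indeed $\varphi(u c_i - v c_{i+1}) = u(u/v)^i - v(u/v)^{i+1} = 0$, so $\varphi$ extends to a homomorphism of abelian groups. Iterating the defining relation gives the convenient identity $v^j c_{i+j} = u^j c_i$ for all $i \in \mathbb{Z}$ and all $j \ge 0$, which I would establish by induction on $j$ and use throughout.

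Since $A = \bigcup_{N \ge 0} A^{(N)}$ is an increasing union, where $A^{(N)} := \langle c_{-N}, \dots, c_N \rangle$, it suffices to show that $\varphi$ restricts to an isomorphism $A^{(N)} \xrightarrow{\sim} \tfrac{1}{(uv)^N}\mathbb{Z}$ for each $N$; passing to the union then identifies $A$ with $\bigcup_N \tfrac{1}{(uv)^N}\mathbb{Z} = \mathbb{Z}[1/(uv)]$, which equals $\mathbb{Z}[u/v, v/u]$ by Lemma~\ref{lem:Z[1/pi(uv)] iso_to Z[1/uv] iso_to Z[u/v, v/u]}. The core of the argument is to show each $A^{(N)}$ is infinite cyclic. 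Writing $(u/v)^i = u^{i+N} v^{N-i}/(uv)^N$, the numerators $\{u^a v^{2N - a} : 0 \le a \le 2N\}$ have gcd $1$ (since $\gcd(u,v) = 1$, already the extremes $u^{2N}$ and $v^{2N}$ are coprime), so Bezout provides integers $e_a$ with $\sum_a e_a u^a v^{2N-a} = 1$. I would then set $g := \sum_{i=-N}^N e_{i+N}\, c_i \in A^{(N)}$ and show that $g$ generates $A^{(N)}$.

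The key computation — and the step I expect to be the main obstacle — is verifying that $c_i = u^{i+N} v^{N-i}\, g$ holds in $A$ for every $i \in \{-N, \dots, N\}$. This follows once one checks the symmetric identity $u^{i+N} v^{N-i} c_j = u^{j+N} v^{N-j} c_i$ for all $i, j$ in this range, which I would derive from $v^{|i-j|} c_{\max(i,j)} = u^{|i-j|} c_{\min(i,j)}$ by multiplying through by suitable powers of $u$ and $v$; the crucial point is that all exponents stay nonnegative precisely because $-N \le i, j \le N$. Summing this identity against the coefficients $e_{j+N}$ collapses $u^{i+N} v^{N-i}\, g$ to $\bigl(\sum_a e_a u^a v^{2N-a}\bigr) c_i = c_i$, so $A^{(N)} = \langle g \rangle$ is cyclic. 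Finally, $\varphi(g) = 1/(uv)^N \ne 0$ forces $g$ to have infinite order, whence $A^{(N)} \cong \mathbb{Z}$ and $\varphi|_{A^{(N)}}$ is injective with image $\tfrac{1}{(uv)^N}\mathbb{Z}$. This yields injectivity and surjectivity of $\varphi$ simultaneously, completing the proof.
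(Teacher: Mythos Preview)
Your argument is correct. You and the paper both use the same map $\varphi(c_i) = (u/v)^i$ and verify it is a well-defined homomorphism in the same way; the difference lies in how injectivity is handled. The paper argues directly: given a nonzero $g = \sum_{i=s}^t n_i c_i$ in the kernel, one clears denominators in $\varphi(g)=0$ to deduce $u \mid n_s$ (using $\gcd(u,v)=1$), then uses the relation $uc_s = vc_{s+1}$ to absorb the bottom term into the next one, shrinking the support until $g = n c_t$ with $n \neq 0$, a contradiction. Your route is more structural: you filter $A$ by the finitely generated pieces $A^{(N)}$ and prove, via a B\'ezout combination, that each $A^{(N)}$ is already infinite cyclic with a generator $g$ satisfying $\varphi(g) = (uv)^{-N}$. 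This simultaneously gives injectivity and identifies the image of each piece as $(uv)^{-N}\integers$, so that the direct-limit description $A \cong \varinjlim_N \integers = \integers[1/(uv)]$ falls out explicitly. The paper's approach is shorter; yours yields the extra information that the subgroups $A^{(N)}$ are cyclic and pins down a generator, which makes the rank-$1$ structure of $A$ (mentioned by Moldavanskii just before Lemma~\ref{lem:Z[u/v, v/u] iso_to A - Baumslag-Solitar}) transparent.
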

\begin{proof}
	Let $\varphi : \{ c_i : i \in \integers \} \to \integers[u/v, v/u]$ be defined by $\varphi(c_i) := (u/v)^i$. 
	
	\emph{Step 1. $\varphi$ gives a homomorphism:} To get
	a homomorphism from $A$ to $\integers[u/v, v/u]$, all we need to check is that
	$u\varphi(c_i) = v\varphi(c_{i+1})$. And indeed, it is true that $u(u/v)^i = v(u/v)^{i+1}$.
	
	\emph{Step 2. $\varphi$ is surjective:}  This is evident because for all $i$, $(u/v)^i$ is in the image of $\varphi$.
	%
	
	\emph{Step 3. $\varphi$ is injective:} Let $g \in \ker(\varphi)$. Assume by contradiction that $g \neq 0$. Then 
	there exist $n_i \in \integers$ such that $g = \sum_{i=s}^t n_i c_i$ with $n_s, n_t \neq 0$. We will show that
	we can assume that the sum has only one term in it (i.e.\ that $s = t$) and then easily get a contradiction.
	
	We have 
	$\varphi(g) = \sum_{i=s}^t n_i (u/v)^i = 0$. Assume $t > s$. Multiplying by $v^t$ and dividing by $u^s$ yields
	\[
	n_s v^{t-s} + n_{s+1} v^{t-s-1} u + n_{s+2} v^{t-s-2} u^2 + \cdots + n_t u^{t-s} = 0.
	\]
	Therefore $u \mid n_s v^{t-s}$, and since $\gcd(u, v^{t-s}) = 1$, we get $u \mid n_s$. Thus we can rewrite $g$ and then 
	apply the relation $u c_i = v c_{i+1}$ to get 
	\[
	g = \frac{n_s}{u} uc_i + \sum_{i = s+1}^t n_i c_i = \frac{n_s}{u} v c_{i+1} + \sum_{i = s+1}^t n_i c_i.
	\]
	Since we assumed $t > s$, we showed that we can rewrite $g$ as
	$\sum_{i=s+1}^t \tilde{n_i} c_i$, decreasing the number of terms in the summation (by at least 1). Continuing in this way,
	we see that $g = n c_t$ for some $n \in \integers$. Because we assumed $g \neq 0$, we know that $n \neq 0$.
	Therefore $0 = \varphi(g) = \varphi(n c_t) = n(u/v)^t$, and this is a contradiction since $n \neq 0$.
\end{proof}
Recall that $d = \gcd(a,b)$, and $a = ud$, $b = vd$.
\begin{corollary}
	\label{cor:Gbar - short exact sequence - with stated action}
	The group $\Gbar$ (defined after Theorem \ref{thm:Moldavanskii on Res(Baumslag-Solitar)}) 
	satisfies a short exact sequence of the form
	\[
	\integers[1/(uv)] \hookrightarrow \Gbar \twoheadrightarrow \integers * (\integers/d\integers).
	\]
	Writing $\integers * \integers/d\integers = \langle x, y \mid x^d \rangle$,  the action of $x$ on $\integers[1/(uv)]$ is trivial, and the action
	of $y$ on $\integers[1/(uv)]$ is multiplication by $u/v$.
\end{corollary}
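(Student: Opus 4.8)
The plan is to read off the short exact sequence directly from the structural results already in hand, and then to pin down the action by an explicit conjugation computation on the generators of $A$. First I would combine Lemma~\ref{lem:Z[u/v, v/u] iso_to A - Baumslag-Solitar} with Lemma~\ref{lem:Z[1/pi(uv)] iso_to Z[1/uv] iso_to Z[u/v, v/u]} to identify $A \cong \integers[u/v, v/u] = \integers[1/(uv)]$, and invoke Corollary~\ref{cor:Gbar/A is a free product} for $\Gbar/A \cong \integers * (\integers/d\integers)$. Since $A \normal \Gbar$ by Lemma~\ref{lem:Gab has A as an abelian normal subgroup}, the inclusion $A \hookrightarrow \Gbar$ together with the quotient map $\Gbar \twoheadrightarrow \Gbar/A$ already constitute the short exact sequence; substituting the two identifications above gives it in the stated form.

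It then remains to describe the action. Because $A$ is abelian and normal, conjugation by elements of $A$ acts trivially on $A$, so the conjugation action of $\Gbar$ on $A$ factors through $\Gbar/A$; this is what makes the phrase ``the action of $x$ (resp.\ $y$)'' well defined, where $x,y$ are the generators of $\Gbar/A = \langle x, y \mid x^d \rangle$ taken as their genuine namesakes in $\Gbar$ (legitimate by the remark following Corollary~\ref{cor:Gbar/A is a free product}). I would recall that the generator $c_i$ of $A$ is $y^i x^d y^{-i}$ and that the isomorphism $\varphi$ of Lemma~\ref{lem:Z[u/v, v/u] iso_to A - Baumslag-Solitar} sends $c_i \mapsto (u/v)^i$.

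For the action of $y$, the computation $y c_i y^{-1} = y^{i+1} x^d y^{-(i+1)} = c_{i+1}$ transports through $\varphi$ to $(u/v)^i \mapsto (u/v)^{i+1}$, i.e.\ multiplication by $u/v$; since the $c_i$ generate $A$, this determines the $y$-action on all of $\integers[1/(uv)]$. For the action of $x$, the defining relations $[y^k x^d y^{-k}, x] = 1$ of $\Gbar$ (present because these commutators lie in $\Res(\Gab{a}{b})$) say exactly that $x$ commutes with every $c_k$, so $x c_i x^{-1} = c_i$ and the $x$-action is trivial.

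I do not expect a serious obstacle: the substance of the corollary is really the accumulated content of the preceding lemmas, and the two computations above are immediate. The only points genuinely requiring care are the (standard) observation that the conjugation action descends to $\Gbar/A$, so that ``the action of $x$'' and ``the action of $y$'' make sense, and the verification that the symbols $x,y$ may be promoted from classes in $\Gbar/A$ to honest elements of $\Gbar$, so that the conjugation formulas are literal identities in $\Gbar$ rather than statements up to $A$.
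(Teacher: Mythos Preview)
Your proposal is correct and follows essentially the same route as the paper: assemble the short exact sequence from Lemmas~\ref{lem:Gab has A as an abelian normal subgroup}, \ref{lem:Z[1/pi(uv)] iso_to Z[1/uv] iso_to Z[u/v, v/u]}, \ref{lem:Z[u/v, v/u] iso_to A - Baumslag-Solitar} and Corollary~\ref{cor:Gbar/A is a free product}, then compute the conjugation action on generators. The only cosmetic difference is that for the $y$-action you use the shift $y c_i y^{-1} = c_{i+1}$ together with $\varphi(c_i)=(u/v)^i$, whereas the paper rewrites the Baumslag--Solitar relation as $(x^d)^u = y(x^d)^v y^{-1}$ and reads off the action on $c_0 = x^d$ directly.
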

\begin{proof}
	Indeed, this is just a summary of the previous results: By Lemma \ref{lem:Gab has A as an abelian normal subgroup},
	$A \normal G$. By Lemma \ref{lem:Z[u/v, v/u] iso_to A - Baumslag-Solitar}, $A \cong \integers[u/v,v/u]$, which
	is isomorphic to $\integers[1/(uv)]$ by Lemma \ref{lem:Z[1/pi(uv)] iso_to Z[1/uv] iso_to Z[u/v, v/u]}. 
	Finally, Corollary \ref{cor:Gbar/A is a free product} gives us the rest of the short exact sequence.
	
	We know that $x$ acts trivially on $\integers[1/(uv)]$ (by conjugation) because in $\Gbar$, the element $x$ commutes with $x^d$, and 
	$x^d$ normally generates $A = \integers[1/(uv)]$.
	
	Finally, consider the relation $y^{-1}x^ay = x^b$ in $\Gbar$. Recall that $a = ud$ and $b  = vd$. So solving the relation for $x^a$, we can rewrite it
	as $(x^d)^u =  y(x^d)^vy^{-1}$. Written additively, this says that $y$ acts on $x^d$ (a generator of $A$) by multiplication by $u/v$.
\end{proof}

\section{When $\gcd(a,b) = 1$: redoing Gelman's formula}
\label{sec:redoing Gelman's formula}
In this section, we give a new proof of a beautiful result of Gelman (Theorem~\ref{thm:Gelman} below).  Although this argument
is a standard derivations argument, one reason to include it is that in the author's 
opinion, this proof better explains the result.
Gelman's formula makes sense in light of the free product $\integers * (\integers/\gcd(a,b)\integers)$ simplifying to $\integers$ and so
giving the semidirect product in Lemma~\ref{lem:gcd(a,b) = 1 - Gbar is a semidirect product}.

As before, we let $\Gab{a}{b} := \langle x, y \mid y^{-1}x^ay = x^b \rangle$. Assume $\gcd(a,b) = 1$.
In \cite{Gelman}, Gelman gives the following exact formula for 
$a_n(\Gab{a}{b})$, the number of
\emph{all} subgroups of index $n$ in  $\Gab{a}{b}$:
\begin{theorem}[Gelman, 2005]
	\label{thm:Gelman}
	Recall that $\gcd(a,b) = 1$. We have
	\[
	a_n(\Gab{a}{b}) = \sum_{\substack{d|n \\ \gcd(d,ab) = 1}} d
	\]
\end{theorem}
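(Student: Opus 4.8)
The plan is to count finite-index subgroups of $\Gab{a}{b}$ by passing through its residually finite quotient $\Gbar$ and using the semidirect product structure that Corollary~\ref{cor:Gbar - short exact sequence - with stated action} provides. Since subgroups of finite index automatically contain $\Res(\Gab{a}{b})$, we have $\subgr(\Gab{a}{b}) = \subgr(\Gbar)$. When $\gcd(a,b) = 1$, the factor $\integers/d\integers$ is trivial, so the quotient $\Gbar/A$ becomes just $\integers$, and $\Gbar$ is a semidirect product $\integers[1/(uv)] \rtimes \integers$ where the generator of $\integers$ acts by multiplication by $u/v$ (here $a = u$, $b = v$, since $d = 1$). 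This is the content I would record as the promised Lemma~\ref{lem:gcd(a,b) = 1 - Gbar is a semidirect product}.

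First I would reduce the subgroup count to a count of transitive permutation representations via the standard correspondence: the number of index-$n$ subgroups equals $\trans(G)/(n-1)!$ where $\trans(G)$ counts transitive actions on $\{1,\dots,n\}$, and these in turn are assembled from all homomorphisms $G \to S_n$. For a semidirect product $N \rtimes \integers$ acting on a set of size $n$, giving a homomorphism to $S_n$ amounts to choosing the image of the $\integers$-generator (an element $\sigma \in S_n$) together with a compatible homomorphism from $N$ that is equivariant with respect to conjugation by $\sigma$. Because $N = \integers[1/(uv)]$ is abelian and the action of $\integers$ is by $u/v$, the equivariance condition turns the count of such data into a derivations computation: for each fixed $\sigma$, I would count the $\sigma$-equivariant homomorphisms, which is where $\Der(\integers, \cdot)$ and the action of $x^a y = x^b$ enters.

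The key combinatorial step is to analyze, for a cyclic action, which finite quotients of $\integers[1/(uv)]$ the generator can act on. A subgroup of index $n$ corresponds to a transitive action where the point stabilizer pulls back to an index-$n$ subgroup; because $\integers[1/(uv)]$ has $1/p$ for every prime $p \mid uv$, any finite quotient through which $N$ factors must have order coprime to $uv$. This is exactly the source of the condition $\gcd(d, ab) = 1$ in the divisor sum: an index-$d$ piece coming from the abelian part $N$ can only occur when $\gcd(d, uv) = \gcd(d,ab) = 1$ (using $d=1$ so $ab = uv$), and for each such $d$ there are precisely $d$ choices, giving the summand $d$. I would make this precise by showing the transitive index-$n$ subgroups biject with pairs consisting of a divisor $d \mid n$ with $\gcd(d,ab)=1$ together with an element of a cyclic group of order $d$ recording the action.

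The hard part, I expect, will be the bookkeeping that converts the derivations/equivariant-homomorphism count into the clean divisor sum, specifically verifying that the $\gcd(d,ab) = 1$ constraint is both necessary and sufficient and that no overcounting or undercounting occurs when one factors a general transitive action into its abelian and cyclic constituents. Once the semidirect product is in hand, the rest is the machinery of counting transitive actions of $N \rtimes \integers$, but pinning down the exact multiplicity (that each admissible $d$ contributes weight $d$ rather than, say, $\phi(d)$ or $1$) requires carefully tracking how the $\integers$-generator permutes the $d$ cosets and confirming that all $d$ cyclic rotations yield genuinely distinct conjugacy classes of subgroups.
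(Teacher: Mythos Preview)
Your proposal has the right scaffolding --- pass to $\Gbar$, use the semidirect product $\integers[1/(ab)] \rtimes \integers$, and identify the $\gcd(d,ab)=1$ constraint as coming from the fact that $\integers[1/(ab)]$ has no quotients of order divisible by a prime in $ab$. But the route you propose through transitive permutation representations is an unnecessary detour, and it introduces a genuine confusion. Counting homomorphisms $N \rtimes \integers \to \Sym(n)$ by fixing $\sigma$ and then counting $\sigma$-equivariant homomorphisms $N \to \Sym(n)$ is \emph{not} a derivations computation: an equivariant homomorphism into a nonabelian target is a different object from a $1$-cocycle $\integers \to A/A_0$. Even if you sorted that out, you would still have to sieve out the transitive representations from all homomorphisms and then divide by $(n-1)!$, and it is not clear your sketch accounts for that step. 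Your closing worry about ``bookkeeping'' and ``overcounting'' is a symptom of this: the permutation-representation framework is fighting you.

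The paper's proof bypasses all of this by counting subgroups of $A \rtimes B$ directly. Shalev's lemma (Lemma~\ref{lem:derivations - a lemma from Shalev's paper}) says that for $G = A \rtimes B$ with $A$ abelian,
\[
\subgr(G) = \sum_{A_0,B_0} \lvert \Der(B_0, A/A_0)\rvert,
\]
the sum over $A_0 \leq A$, $B_0 \leq B$ with $A_0$ $B_0$-invariant and $[A:A_0][B:B_0]=n$. Here $B = \integers$, so every $B_0$ is some $k\integers$ and $\lvert \Der(B_0, A/A_0)\rvert = \lvert A/A_0\rvert$ since $B_0$ is free of rank $1$. Every finite-index $A_0$ is automatically an ideal (hence $B_0$-invariant), and by Lemma~\ref{lem:Z[1/k] has at most 1 subgroup of any index} there is exactly one $A_0$ of index $d$ when $\gcd(d,ab)=1$ and none otherwise. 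The formula drops out in one line: $\subgr(\Gbar) = \sum_{d\mid n,\ \gcd(d,ab)=1} 1 \cdot 1 \cdot d$. This is where derivations actually enter --- they parametrize the lifts of a fixed $B_0$ over a fixed $A_0$ --- and the weight $d$ you were trying to justify is simply $\lvert \Der(\integers, \integers/d\integers)\rvert = d$.
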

In order to (re)prove this, we state a few lemmas. First, we state the isomorphism type of $\Gbar$, the largest residually finite quotient of $\Gab{a}{b}$.
\begin{lemma}
	\label{lem:gcd(a,b) = 1 - Gbar is a semidirect product}
	Let $\Gbar$ be the group defined just after Theorem \ref{thm:Moldavanskii on Res(Baumslag-Solitar)}.
	Then 
	\[\Gbar\cong \Z[1/(ab)] \rtimes \Z,\] 
	where the action of $1 \in \Z$ on 
	$\Z[1/(ab)]$ is multiplication by $a/b$.
\end{lemma}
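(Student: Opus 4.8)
The plan is to specialize Corollary~\ref{cor:Gbar - short exact sequence - with stated action} to the case $d = \gcd(a,b) = 1$ and then observe that the resulting extension splits. First I would note that when $d = 1$ we have $u = a$ and $v = b$, so that $\integers[1/(uv)] = \integers[1/(ab)]$; moreover the cyclic factor $\integers/d\integers = \integers/1\integers$ is trivial, so the free product $\integers * (\integers/d\integers)$ collapses to $\integers$. Thus Corollary~\ref{cor:Gbar - short exact sequence - with stated action} specializes to the short exact sequence
\[
\integers[1/(ab)] \hookrightarrow \Gbar \twoheadrightarrow \integers,
\]
in which, by the last sentence of that corollary, the generator $1 \in \integers$ (corresponding to $y$) acts on $\integers[1/(ab)]$ by multiplication by $u/v = a/b$.

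The next step is to show that this extension splits. Since the quotient $\integers$ is free (equivalently, projective), the sequence automatically splits: concretely, I would pick any lift $\tilde y \in \Gbar$ of the generator $1 \in \integers$ and check that $\langle \tilde y \rangle \cong \integers$ maps isomorphically onto the quotient, noting that $\tilde y$ has infinite order because its image does. This section exhibits $\Gbar$ as the semidirect product $\integers[1/(ab)] \rtimes \integers$, and the conjugation action of $\tilde y$ is exactly the multiplication-by-$a/b$ action recorded above, matching the claimed action of $1 \in \integers$.

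I do not expect any serious obstacle here, since everything of substance has already been established in Section~\ref{sec:largest residually finite quotient}; the lemma is essentially a restatement of Corollary~\ref{cor:Gbar - short exact sequence - with stated action} once the free-product factor degenerates. The only point requiring (routine) care is the splitting of the extension, which follows immediately from freeness of $\integers$. Alternatively, one can exhibit the splitting explicitly by taking $\tilde y = y$ and verifying directly from the presentation of $\Gbar$ that conjugation by $y$ induces multiplication by $a/b$ on $\integers[1/(ab)] \cong A$, which is precisely the content of the final computation in the proof of Corollary~\ref{cor:Gbar - short exact sequence - with stated action}.
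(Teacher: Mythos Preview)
Your proposal is correct and follows essentially the same approach as the paper: specialize Corollary~\ref{cor:Gbar - short exact sequence - with stated action} to $d=1$ (so $u=a$, $v=b$, and the free product collapses to $\integers$), then invoke freeness of $\integers$ to split the resulting extension, with the action read off from the corollary. The paper's proof is terser but makes exactly these two moves.
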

\begin{proof}
	By Corollary \ref{cor:Gbar - short exact sequence - with stated action}, (and since $d = \gcd(a,b) = 1$), we know that 
	\[
	\integers[1/(ab)] \hookrightarrow \Gbar \twoheadrightarrow  \integers.
	\]
	Because $\Z$ is a free group, every such short exact sequence splits.
	
	The statement about the action also follows from Corollary~\ref{cor:Gbar - short exact sequence - with stated action}: Indeed,
	recall that since $d = \gcd(a,b) = 1$, we have in the notation of that corollary, $u=a$ and $v=b$.
\end{proof}

Once we have Lemma~\ref{lem:gcd(a,b) = 1 - Gbar is a semidirect product}, proving Theorem~\ref{thm:Gelman} is standard. Notice that the group $\Gbar$ 
is an example of a group included in Lemma~3.4, part (i) in \cite{Shalev-On_the_degree}, and Shalev has the formula 
(i.e.\ the one in Theorem~\ref{thm:Gelman})
there in his remark (on page 3804) following his proof of his Lemma~3.4. Nevertheless, we will give a few more details anyways.

Lemma~\ref{lem:Z[1/k] has at most 1 subgroup of any index} is well-known. (We will use it in the following section as well.)
\begin{lemma}
	\label{lem:Z[1/k] has at most 1 subgroup of any index}
	Let $0 \neq k \in \Z$. We have
	\[
	\subgr(\Z[1/k]) =
	\begin{cases}
	1 & \text{if } \gcd(n,k) = 1\\
	0 & \text{otherwise.}
	\end{cases}
	\]
	Also, the nonzero ideals of $\integers[1/k]$ are exactly the subgroups of finite index.
\end{lemma}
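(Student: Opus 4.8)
The plan is to exploit that $\Z[1/k]$ is abelian, so every subgroup is normal and a subgroup of index $n$ is precisely the kernel of a surjection onto an abelian group of order $n$. The structural observation I would isolate first is that for every prime $p \mid k$ we have $1/p \in \Z[1/k]$, so multiplication by $p$ is a group automorphism of $\Z[1/k]$. Consequently, in any finite quotient $Q = \Z[1/k]/H$, multiplication by $p$ is surjective, hence bijective, which forces $p \nmid |Q|$. Therefore every finite-index subgroup has index coprime to $k$, and this immediately gives $\subgr(\Z[1/k]) = 0$ whenever $\gcd(n,k) \neq 1$.

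For the case $\gcd(n,k) = 1$ I would first produce a subgroup and then prove it is the only one. The natural candidate is $n\Z[1/k]$. To see it has index exactly $n$, I would compute $\Z[1/k]/n\Z[1/k] \cong \Z/n\Z$: since $\Z[1/k]$ is $\Z$ with the primes dividing $k$ inverted, and these primes are units modulo $n$ (because $\gcd(n,k)=1$), reducing modulo $n$ introduces nothing new, so the quotient is $\Z/n\Z$. For uniqueness, given any $H$ of index $n$, the order-$n$ quotient is annihilated by $n$, so $n\Z[1/k] \subseteq H$; comparing indices via $[\Z[1/k]:n\Z[1/k]] = [\Z[1/k]:H]\,[H:n\Z[1/k]]$ together with $[\Z[1/k]:n\Z[1/k]] = n = [\Z[1/k]:H]$ forces $H = n\Z[1/k]$. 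This yields $\subgr(\Z[1/k]) = 1$ in the coprime case.

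For the ideal statement I would use that $\Z[1/k]$ is a PID (a localization of $\Z$), so every nonzero ideal is principal, say $(\alpha)$ with $\alpha \neq 0$. After stripping off the factors coming from primes dividing $k$, which are units, $\alpha$ is associate to a positive integer $n$ coprime to $k$, whence $(\alpha) = n\Z[1/k]$ has finite index $n$ by the previous paragraph. Conversely, every finite-index subgroup equals $n\Z[1/k]$ for some $n$ coprime to $k$ (by the uniqueness just shown), and $n\Z[1/k]$ is visibly an ideal. Hence the nonzero ideals are exactly the finite-index subgroups.

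I expect the one genuine point needing care to be the index computation $[\Z[1/k]:n\Z[1/k]] = n$, i.e.\ the isomorphism $\Z[1/k]/n\Z[1/k] \cong \Z/n\Z$; everything else is formal bookkeeping. The coprimality hypothesis $\gcd(n,k)=1$ is exactly what makes this isomorphism hold, so I would be careful to invoke it explicitly there rather than leave it implicit.
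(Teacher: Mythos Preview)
Your proof is correct. The paper does not actually supply a proof of this lemma: it simply records the statement and remarks that it is well-known, so there is no argument in the paper to compare yours against. Your approach---using that primes dividing $k$ act invertibly on $\Z[1/k]$ to rule out the non-coprime case, exhibiting $n\Z[1/k]$ and proving uniqueness via $n\Z[1/k] \subseteq H$ plus an index count in the coprime case, and invoking the PID structure for the ideal statement---is a clean and standard way to establish the result.
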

%
%

\begin{lemma}[quoted from Shalev]
	\label{lem:derivations - a lemma from Shalev's paper}
	Suppose $A$ is an abelian group, and let $G = A \rtimes B$. Then
	\[
	\subgr(G) = \sum_{A_0, B_0} \lvert \Der(B_0, A/A_0) \rvert,
	\]
	where the sum is taken over all subgroups $A_0 \leq A$, $B_0 \leq B$ such that $A_0$ is $B_0$-invariant, and 
	$[A : A_0] [B : B_0] = n$.
\end{lemma}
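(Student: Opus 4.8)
The plan is to establish, for each admissible pair of subgroups $(A_0, B_0)$, a bijection between the index-$n$ subgroups $H \leq G$ whose ``shadow'' on $A$ and on $B$ equals $(A_0, B_0)$ and the set $\Der(B_0, A/A_0)$; summing over all such pairs then yields the formula. Write $\pi : G \to G/A \cong B$ for the projection, and to each subgroup $H \leq G$ associate $A_0 := H \cap A$ and $B_0 := \pi(H)$. The whole proof is the standard derivations argument, so the work is in checking that these two invariants, together with a $1$-cocycle, recover $H$ exactly.

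First I would record the structural constraints on $(A_0, B_0)$. Using that $A$ is abelian and normal, a short computation shows $A_0$ is $B_0$-invariant: given $b \in B_0$, pick $h \in H$ with $\pi(h) = b$; then conjugation of any $a \in A_0$ by $h$ lands in $A \cap H = A_0$ and, because $A$ is abelian, equals the action $b \cdot a$. For the index, the second isomorphism theorem gives $AH/H \cong A/A_0$ while $\pi^{-1}(B_0) = AH$, so $[G:H] = [G:AH][AH:H] = [B:B_0][A:A_0]$; hence the condition $[A:A_0][B:B_0] = n$ is precisely $[G:H] = n$.

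Next comes the core bijection. Fixing an admissible pair, I would show that for each $b \in B_0$ the set $\{a \in A : ab \in H\}$ is a single coset of $A_0$ (if $a_1 b, a_2 b \in H$ then $a_1 a_2^{-1} \in H \cap A = A_0$, and $A_0$-translates stay in $H$). Defining $\delta(b)$ to be this coset in $A/A_0$ and multiplying two elements $a_{b_1} b_1, a_{b_2} b_2 \in H$ produces $a_{b_1}(b_1 \cdot a_{b_2})\, b_1 b_2 \in H$, which translates into the cocycle identity $\delta(b_1 b_2) = \delta(b_1) + b_1 \cdot \delta(b_2)$; here the $B_0$-invariance of $A_0$ is exactly what makes the $B_0$-action on $A/A_0$ well-defined. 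Conversely, from a derivation $\delta$ I would build $H := \{ab : b \in B_0,\ aA_0 = \delta(b)\}$ and verify closure under products and inverses using the cocycle identity together with $\delta(e) = 0$ (which forces $A_0 \subseteq H$). These two constructions are mutually inverse, giving $\lvert \Der(B_0, A/A_0) \rvert$ subgroups for each admissible pair.

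The main obstacle I anticipate is bookkeeping rather than a deep idea: one must keep the action conventions consistent so that the product of two elements of $H$ yields exactly the cocycle relation and not its transpose, and check that $B_0$-invariance of $A_0$ is invoked in all the right places — both to define the $B_0$-action on $A/A_0$ and to ensure that the set reconstructed from a derivation is genuinely closed under conjugation, hence a subgroup. Once the conventions are pinned down, injectivity and surjectivity of the correspondence are immediate, since the coset $\{a : ab \in H\}$ both determines and is determined by $\delta(b)$.
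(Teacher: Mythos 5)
Your argument is correct and complete: the invariants $A_0 = H \cap A$, $B_0 = \pi(H)$, the index computation via $\pi^{-1}(B_0) = AH$ and the second isomorphism theorem, the verification that $\{a \in A : ab \in H\}$ is a coset of $A_0$, the cocycle identity from multiplying $a_{b_1}b_1$ and $a_{b_2}b_2$, and the inverse construction $H = \{ab : b \in B_0,\ aA_0 = \delta(b)\}$ together constitute the standard and correct proof of this correspondence. Note, however, that the paper does not prove this lemma at all --- it is quoted verbatim from Shalev and justified only by the citation ``Lemma 2.1 part (iii) in [Shalev]'' --- so there is no in-paper argument to compare against; what you have written is the proof one would find in the cited source. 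The only point worth double-checking in your write-up is the closure of the reconstructed $H$ under inversion, which needs $\delta(b^{-1}) = -b^{-1}\cdot\delta(b)$ (a consequence of $\delta(e)=0$ and the cocycle identity applied to $bb^{-1}$); you gesture at this with ``closure under products and inverses using the cocycle identity together with $\delta(e)=0$,'' and that is exactly the right ingredient, so no gap remains.
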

This is Lemma 2.1 part (iii) in \cite{Shalev-On_the_degree}.

\begin{proof}[Proof of Theorem~\ref{thm:Gelman}]
	In the notation of Lemma \ref{lem:derivations - a lemma from Shalev's paper}, let $A = \integers[1/(ab)]$ and $B = \integers$, so that as in 
	Lemma~\ref{lem:gcd(a,b) = 1 - Gbar is a semidirect product}, we have $\Gbar \cong A \rtimes B$.
	
	Let $B_0 \leq_f B$ (i.e.\ let $B_0$ be a subgroup of finite index in $B$). Then a subgroup $A_0 \leq_f A$ is $B_0$-invariant iff it is $B$-invariant iff $A_0$ is an ideal of $A$. Recall that since $\integers$
	is a free group, regardless of its action on $\integers/d\integers$, we get that $ \lvert \Der(\integers, \integers/d\integers) \rvert = d$. Combining
	the previous two sentences with
	Lemmas~\ref{lem:derivations - a lemma from Shalev's paper} and \ref{lem:Z[1/k] has at most 1 subgroup of any index}, we conclude that 
	\[\tag{*}
	\subgr(\Gbar) = \sum_{d \mid n} \subgr[n/d](\integers) \subgr[d](\integers[1/(ab)]) d.
	\]
	But $\subgr[n/d](\integers) = 1$, and then using Lemma~\ref{lem:Z[1/k] has at most 1 subgroup of any index} again, (*) becomes
	\[
	\subgr(\Gbar) = \sum_{\substack{d|n \\ \gcd(d,ab) = 1}} d.
	\]
	We are done because $\Gbar$ is the largest residually finite quotient of $\Gab{a}{b}$.
\end{proof}

Gelman's formula simplifies to the following when counting maximal subgroups:

\begin{corollary}
	\label{cor:maxsubgr(Gab) when gcd(a,b) = 1}
	Recall that here, $\gcd(a,b) = 1$. Every maximal subgroup of $\Gab{a}{b}$ has prime index, and 
	\[
	\maxsubgr[p](\Gab{a}{b}) = \begin{cases}
	p + 1 & \text{if } p \nmid ab\\
	0 & \text{otherwise}.
	\end{cases}
	\]
\end{corollary}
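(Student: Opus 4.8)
The plan is to deduce the result from Gelman's formula (Theorem~\ref{thm:Gelman}) once we establish the dichotomy that a finite-index subgroup of $\Gab{a}{b}$ is maximal exactly when its index is a prime. One direction is free: if $[\Gab{a}{b} : H] = p$ is prime, then any intermediate $H \le K \le \Gab{a}{b}$ satisfies $[\Gab{a}{b} : K]\,[K:H] = p$, forcing $K \in \{H, \Gab{a}{b}\}$, so $H$ is maximal. Granting the converse, a maximal subgroup of index $n$ is the same thing as a subgroup of prime index $n$, so $\maxsubgr[n](\Gab{a}{b}) = 0$ for composite $n$ and $\maxsubgr[p](\Gab{a}{b}) = \subgr[p](\Gab{a}{b})$ for primes $p$. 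Evaluating the divisor sum of Theorem~\ref{thm:Gelman} at $n = p$, where the only divisors are $1$ and $p$, then gives the asserted value, so the entire content is the converse implication.

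To prove that a maximal subgroup $M$ has prime index I would work in $\Gbar$. Since $\maxsubgr[n]$ counts maximal subgroups of finite index $n$, it suffices to argue there: finite-index subgroups of $\Gab{a}{b}$ correspond, preserving index and inclusions, to subgroups of $\Gbar$ (any subgroup lying over a finite-index subgroup is again of finite index), so a finite-index subgroup is maximal in $\Gab{a}{b}$ iff its image is maximal in $\Gbar$. By Lemma~\ref{lem:gcd(a,b) = 1 - Gbar is a semidirect product} write $\Gbar \cong A \rtimes \langle y\rangle$ with $A = \Z[1/(ab)]$ and $y$ acting by multiplication by $a/b$. Since $A \normal \Gbar$, the set $MA$ is a subgroup containing $M$, so maximality forces $MA = M$ or $MA = \Gbar$. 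If $MA = M$, then $A \le M$ and $M/A$ is a maximal subgroup of $\Gbar/A \cong \Z$, hence $M/A = p\Z$ for a prime $p$ and $[\Gbar : M] = p$. If instead $MA = \Gbar$, then $[\Gbar:M] = [A : M \cap A]$, and since $\Gbar = MA$ normalizes $M \cap A$ the latter is a $\Gbar$-submodule of $A$; moreover the subgroups between $M$ and $\Gbar$ match the $\Gbar$-submodules between $M\cap A$ and $A$ (via $K \mapsto K\cap A$, with inverse $N \mapsto NM$), so $M\cap A$ must be a \emph{maximal} $\Gbar$-submodule.

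The crux is therefore to classify the maximal $\Gbar$-submodules of $A = \Z[1/(ab)]$, and this is the step I expect to be the main obstacle. Here I would use that $y$ and $y^{-1}$ act by multiplication by $a/b$ and $b/a$, so an additive subgroup is $\Gbar$-invariant exactly when it is stable under multiplication by $\Z[a/b, b/a]$, which equals $\Z[1/(ab)]$ by Lemma~\ref{lem:Z[1/pi(uv)] iso_to Z[1/uv] iso_to Z[u/v, v/u]}; thus the $\Gbar$-submodules of $A$ are precisely the ideals of $\Z[1/(ab)]$. Since $\Z[1/(ab)]$ is a rank-one ring whose maximal ideals are exactly the $(p)$ with $p\nmid ab$, each having the cyclic quotient $\Z/p\Z$ (Lemma~\ref{lem:Z[1/k] has at most 1 subgroup of any index}), a maximal $\Gbar$-submodule has quotient $\Z/p\Z$, whence $[\Gbar:M]=p$. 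The rank-one hypothesis is essential: it is what forces the simple quotients to have order exactly $p$ rather than $p^k$, and hence rules out composite (and infinite) index. With the dichotomy in hand, $\maxsubgr[p](\Gab{a}{b}) = \subgr[p](\Gab{a}{b})$ and Theorem~\ref{thm:Gelman} finishes the proof; concretely, when $p \nmid ab$ the index-$p$ maximal subgroups split into the single normal subgroup containing $A$ (the preimage of $p\Z$) together with the $p$ complements counted by $\lvert\Der(\Z,\Z/p\Z)\rvert = p$, accounting for the value $p+1$.
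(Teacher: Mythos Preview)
Your argument follows the same route as the paper's---reduce to $\Gbar$, use the semidirect product $A\rtimes\Z$ to force prime index on any (finite-index) maximal subgroup, then read off the count from Gelman's formula---and your case split $A\le M$ versus $MA=\Gbar$ is in fact more careful than the paper's one-line version, which asserts that $M\cap A$ is a maximal ideal of index $n$ without isolating the case $A\le M$.

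One caveat worth flagging: when you say that evaluating Gelman's sum at $n=p$ ``gives the asserted value,'' check the case $p\mid ab$. The divisor $d=1$ always satisfies $\gcd(d,ab)=1$, so the sum equals $1$, not $0$; concretely, the preimage of $p\Z$ under $\Gbar\twoheadrightarrow\Z$ is a maximal subgroup of index $p$ regardless of whether $p\mid ab$. Thus your argument (and the paper's, which makes the identical appeal to Theorem~\ref{thm:Gelman}) actually yields $\maxsubgr[p](\Gab{a}{b})=1$ when $p\mid ab$, and the ``$0$'' in the displayed formula appears to be a slip in the statement itself rather than a defect in your reasoning.
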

\begin{proof}
	The reason why $\Gab{a}{b}$ has no maximal subgroups of non-prime index is because $M \leq G$ with $M$ maximal of index $n$ implies that
	$M \cap \integers[1/(ab)]$ is a maximal ideal of $\integers[1/(ab)]$ of index $n$, and such an $n$ can only be prime. The present corollary
	then follows from Theorem~\ref{thm:Gelman}.
\end{proof}

\section{When $\gcd(a,b) \neq 1$: an asymptotic formula for $\maxsubgr(\Gab{a}{b})$}
\label{sec:when gcd(a,b) > 1, a formula for maxsubgr(Gab)}
In this section, we will write $m := \gcd(a,b)$ and assume $m > 1$.

The main goal of this section is to prove Theorem~\ref{thm:baumslag-solitar - maximal subgroup growth}. 
To this end, the first goal is to prove Proposition \ref{prop:few fixed points for a random permutation of order m}. It says
that (for a fixed number $m$) if $n$ is large, then a random element of $\Sym(n)$ of order
dividing $m$ most likely has very few fixed points.\footnote{A fixed point of a permutation $\sigma \in \Sym(n)$ is an $i \in [n]$ 
	such that $\sigma(i) = i$.} By a result of \cite{Babai-Hayes}, this will imply that a random element of
order $m$ in $\Sym(n)$ (using the uniform distribution) together with a random element of any order, will with high
probability generate
either $A_n$ or $\Sym(n)$. This is what we need in order to calculate the maximal subgroup growth 
of $\integers * \integers/m \integers$
and hence of the Baumslag-Solitar groups (where $\gcd(a,b)  = m \neq 1$).

We first state formula (8) on page 115 of \cite{Muller}.
\begin{theorem}[M\"uller, 1996]
	\label{thm:Muller's asymptotic formula - all finite groups}
	Let $G$ be a finite group of order $m \geq 2$. Then
	$\lvert \Hom(G,\Sym(n)) \rvert$ is asymptotic to 
	\[
	K_G n^{(1-1/m)n} \exp\left(-(1 - 1/m)n + \sum_{\substack{d < m \\ d|m}} \frac{\subgr[d](G)}{d} n^{d/m}\right),
	\]
	where
	\[
	K_G :=
	\begin{cases}
	m^{-1/2} & \text{if $m$ is odd}\\
	m^{-1/2}e^{-\subgr[m/2](G)^2/(2m)} & \text{otherwise.}
	\end{cases}
	\]
\end{theorem}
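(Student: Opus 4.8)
The plan is to prove this by reducing it to a coefficient-extraction problem for an explicit generating function and then applying the saddle-point (Hayman) method, exactly as in the classical Moser--Wyman analysis of the number of involutions, which is the case $m = 2$. First I would pass to the exponential generating function
\[
H_G(z) := \sum_{n \geq 0} \frac{\lvert \Hom(G,\Sym(n)) \rvert}{n!}\, z^n,
\]
observing that a homomorphism $G \to \Sym(n)$ is the same datum as a $G$-action on $[n]$.

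Since every $G$-set decomposes uniquely into transitive orbits, the exponential formula gives $H_G(z) = \exp\bigl(\sum_{k \geq 1} \frac{\trans[k](G)}{k!} z^k\bigr)$, where $\trans[k](G)$ counts the transitive $G$-actions on $[k]$. The standard bijection between transitive actions of degree $k$ and index-$k$ subgroups (fixing a base point leaves $(k-1)!$ relabelings, all giving distinct representations) yields $\trans[k](G) = (k-1)!\,\subgr[k](G)$, hence $H_G(z) = \exp\bigl(\sum_{k \geq 1} \frac{\subgr[k](G)}{k} z^k\bigr)$. Here I would use finiteness of $G$ crucially: by Lagrange $\subgr[k](G) = 0$ unless $k \mid m$, and $\subgr[m](G) = 1$ (the trivial subgroup), so the exponent is a \emph{polynomial} of degree $m$ with leading term $z^m/m$, and
\[
H_G(z) = \exp\!\left(\frac{z^m}{m} + \sum_{\substack{d \mid m \\ d < m}} \frac{\subgr[d](G)}{d}\, z^d\right) =: \exp\bigl(P(z)\bigr).
\]

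Now $\lvert \Hom(G,\Sym(n)) \rvert = n!\,[z^n]\exp(P(z))$, and I would extract this coefficient by the saddle-point method, since $\exp(P)$ is entire. The positive saddle $r = r_n$ solves $r P'(r) = n$; as $P'(r) \sim r^{m-1}$ one gets $r_n \sim n^{1/m}$, and Hayman's theorem gives
\[
[z^n]\exp(P(z)) \sim \frac{\exp\bigl(P(r_n)\bigr)}{r_n^{\,n}\sqrt{2\pi\, b(r_n)}}, \qquad b(r) = r\frac{d}{dr}\bigl(r P'(r)\bigr) \sim m\,r^m \sim m n.
\]
Feeding in Stirling $n! \sim \sqrt{2\pi n}\,(n/e)^n$, the factor $(n/e)^n r_n^{-n}$ produces $n^{(1-1/m)n}e^{-n}$; the leading part $r_n^m/m \approx n/m$ of $P(r_n)$ gives $e^{n/m}$, combining with $e^{-n}$ into $e^{-(1-1/m)n}$; the lower terms of $P(r_n)$ produce $\exp(\sum_{d < m,\, d \mid m} \frac{\subgr[d](G)}{d}\, n^{d/m})$; and $\sqrt{2\pi n}/\sqrt{2\pi m n}$ collapses to $m^{-1/2}$, which is $K_G$ in the odd case.

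The main obstacle is carrying the expansion of the saddle $r_n$ to high enough order. To land every term $n^{d/m}$ exactly and, in particular, to produce the even-$m$ correction factor $e^{-\subgr[m/2](G)^2/(2m)}$, I would expand $r_n = n^{1/m}\bigl(1 + \sum_{j \geq 1} c_j n^{-j/m}\bigr)$, solve for the $c_j$ recursively from $r P'(r) = n$, and track how these corrections feed through $P(r_n)$, through $r_n^{-n}$ (where a shift of size $n^{-j/m}$ is amplified by the exponent $n$), and through $b(r_n)$. The delicate point is that the degree-$m/2$ term of $P$ interacts at second order with the leading saddle correction, and this cross term is exactly what contributes the constant $-\subgr[m/2](G)^2/(2m)$ to the exponent; it is absent when $m$ is odd, since then $m/2$ is not an integer and no such divisor $d = m/2$ occurs. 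Verifying Hayman admissibility for $\exp(P)$ and controlling the uniformity of the error so that the subdominant $n^{d/m}$ contributions are genuinely captured rather than absorbed into the remainder is the part I would treat most carefully.
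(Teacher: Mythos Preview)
The paper does not prove this theorem. It is quoted verbatim as ``formula (8) on page 115 of \cite{Muller}'' and attributed to M\"uller (1996); the paper uses it as a black box to derive Corollary~\ref{cor:Muller's asymptotic formula - cyclic groups} and everything downstream. So there is no proof in the paper to compare your attempt against.

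That said, your sketch is a faithful outline of the standard argument, and it is essentially the method M\"uller himself uses: the identity $H_G(z)=\exp\bigl(\sum_{d\mid m}\frac{\subgr[d](G)}{d}z^d\bigr)$ is correct (and is exactly the reason M\"uller can treat arbitrary free products of finite and free groups), and the Hayman/saddle-point analysis with $r_n\sim n^{1/m}$ is the right mechanism. Your identification of where the even-$m$ constant $e^{-\subgr[m/2](G)^2/(2m)}$ comes from---the second-order interaction between the $z^{m/2}$ term of $P$ and the first saddle correction---is also correct. The only caveat is that your write-up is a plan rather than a proof: the admissibility verification and the bookkeeping needed to show that \emph{all} of the $n^{d/m}$ terms survive in the exponent (rather than being swallowed by the error) require genuine work, and M\"uller's paper carries this out in detail. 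But none of that work appears in the present paper, which simply cites the result.
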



We will use the following easy consequence of Theorem \ref{thm:Muller's asymptotic formula - all finite groups}:
\begin{corollary}
	\label{cor:Muller's asymptotic formula - cyclic groups}
	Let  $m \geq 2$. Then
	$\lvert \Hom(\integers/m\integers,\Sym(n)) \rvert$ is asymptotic to $f(n)$, where
	\[\tag{*}
	f(n) := K_m n^{(1-1/m)n} \exp\left(-(1 - 1/m)n + \sum_{\substack{d < m \\ d|m}} \frac{n^{d/m}}{d} \right),
	\]
	and
	\[
	K_m :=
	\begin{cases}
	m^{-1/2} & \text{if $m$ is odd}\\
	m^{-1/2}e^{-1/(2m)} & \text{otherwise.}
	\end{cases}
	\]
\end{corollary}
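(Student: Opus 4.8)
The plan is to obtain the corollary as a direct specialization of Müller's Theorem~\ref{thm:Muller's asymptotic formula - all finite groups} to the cyclic group $G = \integers/m\integers$, which has order $m \geq 2$. Since $\lvert \Hom(\integers/m\integers, \Sym(n)) \rvert$ is exactly the quantity governed by that theorem, all that remains is to evaluate the group-theoretic constants $\subgr[d](G)$ and $\subgr[m/2](G)$ that appear in the general formula when $G$ is cyclic, and to observe that the stated $f(n)$ and $K_m$ are what Müller's $K_G n^{(1-1/m)n}\exp(\cdots)$ becomes under those evaluations.

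The one arithmetic input I would record is the well-known fact that a cyclic group of order $m$ has, for each divisor $d$ of $m$, exactly one subgroup of index $d$ (namely the unique subgroup of order $m/d$), and no subgroups of index not dividing $m$. Thus $\subgr[d](\integers/m\integers) = 1$ whenever $d \mid m$. In particular, taking $d = m/2$, when $m$ is even there is exactly one subgroup of index $m/2$ (the unique subgroup of order $2$), so $\subgr[m/2](\integers/m\integers) = 1$ and hence $\subgr[m/2](G)^2 = 1$; when $m$ is odd, $m/2$ is not an integer and the even case does not arise.

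Substituting $\subgr[d](G) = 1$ for each divisor $d < m$ of $m$ into the exponential sum of Theorem~\ref{thm:Muller's asymptotic formula - all finite groups} collapses $\sum_{d<m,\, d\mid m} \frac{\subgr[d](G)}{d}\, n^{d/m}$ to $\sum_{d<m,\, d\mid m} \frac{1}{d}\, n^{d/m}$, which is precisely the sum in $(*)$. Likewise, substituting $\subgr[m/2](G)^2 = 1$ into $K_G$ yields $K_G = m^{-1/2} e^{-1/(2m)}$ in the even case and $K_G = m^{-1/2}$ in the odd case, matching $K_m$. This recovers $f(n)$ exactly, so the asymptotic for $\lvert \Hom(\integers/m\integers,\Sym(n)) \rvert$ follows.

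There is no substantial obstacle here: the content is entirely carried by Müller's theorem, and the specialization is a routine substitution. The only point requiring (minor) care is the bookkeeping of the $\subgr[m/2](G)$ term, which is nonzero precisely in the even case and is what accounts for the $e^{-1/(2m)}$ correction factor in $K_m$.
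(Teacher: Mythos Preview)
Your proposal is correct and matches the paper's approach exactly: the paper states the corollary as ``an easy consequence'' of Theorem~\ref{thm:Muller's asymptotic formula - all finite groups} and gives no further proof, so your specialization---using that a cyclic group of order $m$ has precisely one subgroup of each index $d \mid m$, hence $\subgr[d](\integers/m\integers)=1$---is precisely the intended argument.
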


\begin{proposition}
	\label{prop:few fixed points for a random permutation of order m}
	Fix an integer $m \geq 2$. Let $M(n)$ denote the elements of $\Sym(n)$ of order dividing $m$,
	and let $B(n)$ denote the elements of $M(n)$ which have at least $g(n) := \lfloor n/ \log(n) \rfloor$ fixed points.
	Then 
	\[
	\frac{|B(n)|}{|M(n)|} \longrightarrow 0.
	\]
\end{proposition}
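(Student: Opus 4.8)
The plan is to sandwich the ratio: I would bound $|M(n)|$ using the asymptotic function $f(n)$ of Corollary~\ref{cor:Muller's asymptotic formula - cyclic groups} and bound $|B(n)|$ from above by a double-counting argument, and then show the resulting bound on $|B(n)|/|M(n)|$ tends to $0$. First note that a homomorphism $\integers/m\integers \to \Sym(n)$ is determined by where it sends a generator, so $M(n)$ is in bijection with $\Hom(\integers/m\integers,\Sym(n))$ and $|M(n)| \sim f(n)$. Taking logarithms and observing that the terms $\sum_{d<m,\,d|m} n^{d/m}/d$ contribute only $O(n^{1/2})$ (the largest exponent $d/m$ being at most $1/2$), I record the expansion
\[
\log|M(n)| = (1-1/m)\,n\log n - (1-1/m)\,n + o(n).
\]

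Next, to control $|B(n)|$ I would bound the number of elements with many fixed points by counting pairs $(\sigma, S)$ where $\sigma \in M(n)$ and $S$ is a $g(n)$-element subset of the fixed-point set of $\sigma$. Counting $\sigma$ first gives $\sum_{\sigma \in M(n)} \binom{\Fix(\sigma)}{g(n)}$, while choosing $S$ first (an arbitrary $g(n)$-set, after which $\sigma$ restricts to an arbitrary order-dividing-$m$ permutation of the remaining $n-g(n)$ points) gives $\binom{n}{g(n)}\,|M(n-g(n))|$. Since every $\sigma \in B(n)$ has $\Fix(\sigma) \ge g(n)$ and so contributes at least $1$ to the left-hand sum, I obtain
\[
|B(n)| \;\le\; \binom{n}{g(n)}\,\bigl|M(n-g(n))\bigr|.
\]

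Combining these yields $|B(n)|/|M(n)| \le \binom{n}{g(n)}\,|M(n-g(n))|/|M(n)|$, and the work is to show the right side tends to $0$, which I would do by taking logarithms. The binomial factor is small: $\binom{n}{g(n)} \le (en/g(n))^{g(n)}$ gives $\log\binom{n}{g(n)} \le g(n)\bigl(\log(n/g(n)) + 1\bigr) = O\!\bigl(\tfrac{n}{\log n}\log\log n\bigr) = o(n)$, since $n/g(n)$ grows like $\log n$. The dominant term comes from the ratio: using $|M(n-g(n))| \sim f(n-g(n))$ together with the elementary estimate $n\log n - (n-g)\log(n-g) = g\log n + O(g)$ and the fact that $g(n)\log n = n + O(\log n)$, I get
\[
\log\frac{\bigl|M(n-g(n))\bigr|}{|M(n)|} = -(1-1/m)\,n + o(n).
\]
Because $m \ge 2$ forces $1-1/m \ge 1/2 > 0$, this negative linear term dominates the $o(n)$ contribution of the binomial factor, so $\log\bigl(|B(n)|/|M(n)|\bigr) \to -\infty$, proving the claim.

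The main obstacle I anticipate is the asymptotic bookkeeping. I must verify that Corollary~\ref{cor:Muller's asymptotic formula - cyclic groups} applies to $M(n-g(n))$ (clear, since $n-g(n) \to \infty$) and then expand $n\log n - (n-g(n))\log(n-g(n))$ and the subleading $n^{d/m}$ differences carefully enough to confirm that every correction term is genuinely $o(n)$. The conceptual crux is simply that the ``entropy'' factor $\binom{n}{g(n)}$ grows only like $\exp(o(n))$, whereas deleting $g(n)$ points costs a factor of order $\exp\bigl((1-1/m)n\bigr)$ in the count of elements of order dividing $m$; the slow rate $g(n) = \lfloor n/\log n\rfloor$ is precisely calibrated to make the former negligible against the latter.
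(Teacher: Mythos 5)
Your proposal is correct and follows essentially the same route as the paper: both reduce to showing $\binom{n}{g(n)}\,|M(n-g(n))|/|M(n)| \to 0$ (you via double counting pairs $(\sigma,S)$, the paper via a union over the $\binom{n}{g(n)}$ possible complements of a fixed-point set), then invoke Corollary~\ref{cor:Muller's asymptotic formula - cyclic groups} and verify that the entropy factor $\binom{n}{g(n)} = e^{o(n)}$ is beaten by the $e^{-(1-1/m)n(1+o(1))}$ decay of $f(n-g(n))/f(n)$. The only difference is presentational: you expand logarithms where the paper (Lemmas~\ref{lem:upper bound on nChoose_k} and~\ref{lem:nChoose_k times fraction approaches 0}) chains explicit inequalities.
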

\begin{proof}[Proof of reduction to Lemma \ref{lem:nChoose_k times fraction approaches 0}]
	Write the group $\Sym(n)$ as $\Sym([n])$, and so for any $\Omega \subseteq [n]$ 
	(with $|\Omega| \geq 1$) we can consider the subgroup $\Sym(\Omega)$ of 
	$\Sym([n])$.\footnote{$\Sym(\Omega)$ is the group $\{\sigma \in \Sym([n]) \mid \sigma(i) = i 
		\text{ for all } i \in [n] \setminus  \Omega \}$. }
	
	Notice that we have the following equality of sets:
	\[
	B(n) = \bigcup_{\substack{\Omega \subseteq [n] \\ |\Omega| = n - g(n)}} B(n) \cap \Sym(\Omega).
	\]
	Also, note that $|B(n) \cap \Sym(\Omega)| \leq |M(|\Omega|)| = |M(n - g(n))|$. Therefore,
	\[\begin{aligned}
	|B(n)| &\leq \sum_{\substack{\Omega \subseteq [n] \\ |\Omega| = n - g(n)}} |M(n - g(n))| \\
	&= \binom{n}{n - g(n)} |M(n - g(n))| = \binom{n}{g(n)} |M(n - g(n))| .
	\end{aligned}\]
	Therefore, to prove this proposition, it is sufficient to show
	\[\tag{**}
	\binom{n}{g(n)} \frac{|M(n-g(n))|}{|M(n)|} \longrightarrow 0.
	\]
	
	Notice that once we choose a generator $x$ of $C_m$, the cyclic group of order $m$, there is an obvious 
	bijection\footnote{A homomorphism from $C_m$ to any group $G$ 
		is just a choice of where to send $x$: any element of $G$
		of order dividing $m$.} 
	between $M(n)$ and $\Hom(C_m, \Sym(n))$. 
	Thus Corollary \ref{cor:Muller's asymptotic formula - cyclic groups} says that $|M(n)|$ is asymptotic to
	the function $f(n)$ (defined in that corollary). Of course, $M(n - g(n))$ is thus asymptotic to 
	$f(n - g(n))$. Combining these observations with (**), we conclude that all we need to show is the following
	(which is Lemma \ref{lem:nChoose_k times fraction approaches 0}):
	\[
	\binom{n}{g(n)} \frac{f(n - g(n))}{f(n)} \longrightarrow 0.
	\]
\end{proof}

Before proving Lemma \ref{lem:nChoose_k times fraction approaches 0}, we first obtain a preliminary lemma.
\begin{lemma}
	\label{lem:upper bound on nChoose_k}
	Let $\displaystyle{g(n):= \left \lfloor{\frac{n}{\log n}}\right \rfloor}$.  For all large $n$, 
	$\displaystyle{ \binom{n}{g(n)}} < e^{3 g(n) \log \log (n)}$.
\end{lemma}
\begin{proof} 
	We have $\displaystyle{ \binom{n}{g(n)} < \frac{n^{g(n)}}{g(n)!}}$. Using Stirling's formula (a lower bound) on the
	denominator, we get
	\[
	\frac{n^{g(n)}}{g(n)!} < \frac{n^{g(n)}e^{g(n)}}{\sqrt{2\pi g(n)} g(n)^{g(n)}}
	\]
	We have that
	\[
	\frac{n^{g(n)}e^{g(n)}}{\sqrt{2\pi g(n)} g(n)^{g(n)}} < \frac{n^{g(n)}}{ g(n)^{g(n)}} e^{g(n)} \\
	< \frac{n^{g(n)}}{ g(n)^{g(n)}} e^{g(n) \log \log n}.
	\]
	So it is sufficient to show that
	\[
	\frac{n^{g(n)}}{g(n)^{g(n)}} < e^{2g(n) \log \log n}. 
	\]
	We have 
	\[
	\frac{n^{g(n)}}{g(n)^{g(n)}}  = \frac{n^{g(n)}}{\left\lfloor \frac{n}{\log n} \right\rfloor^{g(n)}}  
	< \frac{n^{g(n)}}{\left(\frac{n}{\log n} - 1\right)^{g(n)}}
	= \frac{n^{g(n)}}{\left(\frac{n - \log n}{\log n}\right)^{g(n)}},
	\]
	and also
	\[
	\left(\frac{n}{n - \log(n)} \right)^{g(n)} (\log n)^{g(n)} < (\log n)^{2 g(n)} = e^{2g(n) \log \log n}.
	\]
\end{proof}

\begin{lemma}
	\label{lem:nChoose_k times fraction approaches 0}
	Let $f(n)$ be as in Corollary \ref{cor:Muller's asymptotic formula - cyclic groups} and 
	$g(n) := \left \lfloor{\frac{n}{\log n}}\right \rfloor$. Then
	\[
	\binom{n}{g(n)} \frac{f(n - g(n))}{f(n)} \longrightarrow 0.
	\]
\end{lemma}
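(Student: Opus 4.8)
The plan is to take logarithms and show that $\log\!\bigl[\binom{n}{g(n)}\, f(n-g(n))/f(n)\bigr] \to -\infty$. Write $g = g(n) = \lfloor n/\log n\rfloor$, and recall from Lemma~\ref{lem:upper bound on nChoose_k} that for large $n$ we have $\log\binom{n}{g} < 3 g \log\log n$. Since $g \le n/\log n$, this gives $\log\binom{n}{g} = O\!\left(\frac{n \log\log n}{\log n}\right) = o(n)$, so the binomial factor contributes only a sublinear amount to the exponent. The whole argument then reduces to showing that $\log f(n-g) - \log f(n) = -(1-\tfrac1m)n + o(n)$; since $m \ge 2$ forces $1 - \tfrac1m \ge \tfrac12 > 0$, this negative linear term will swamp every sublinear correction and drive the logarithm to $-\infty$.

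Next I would expand $\log f$. From the definition of $f$ in Corollary~\ref{cor:Muller's asymptotic formula - cyclic groups}, the constant $K_m$ cancels in the difference, and writing $S(n) := \sum_{d \mid m,\, d < m} n^{d/m}/d$ we get
\[
\log f(n-g) - \log f(n) = (1-\tfrac1m)\bigl[(n-g)\log(n-g) - n\log n\bigr] + (1-\tfrac1m)g + \bigl[S(n-g) - S(n)\bigr].
\]
The heart of the computation is the Taylor expansion $(n-g)\log(n-g) - n\log n = -g\log n - g + O(g^2/n)$, valid because $g/n \to 0$. The crucial cancellation is that the $-g$ here is exactly killed by the $+(1-\tfrac1m)g$ coming from the linear factor $\exp\!\bigl(-(1-\tfrac1m)n\bigr)$ in $f$; what survives is $(1-\tfrac1m)\bigl[-g\log n + O(g^2/n)\bigr]$.

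It then remains to read off the sizes of the surviving pieces. Because $g = \lfloor n/\log n\rfloor$ satisfies $n - \log n < g\log n \le n$, we have $g\log n = n + O(\log n)$, so $-(1-\tfrac1m)g\log n = -(1-\tfrac1m)n + O(\log n)$; this is the decisive linear term. The Taylor remainder $(1-\tfrac1m)O(g^2/n) = O\!\left(n/(\log n)^2\right)$ is positive but sublinear, and since every proper divisor $d$ of $m$ satisfies $d/m \le \tfrac12$ we have $S(n) = O(\sqrt n)$, so $|S(n-g) - S(n)| = o(n)$ as well (indeed $S$ is increasing, so $S(n-g) - S(n) \le 0$). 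Adding back the $\log\binom{n}{g} = o(n)$ bound from the first step, I obtain
\[
\log\!\left[\binom{n}{g}\frac{f(n-g)}{f(n)}\right] = -(1-\tfrac1m)n + o(n) \longrightarrow -\infty,
\]
which proves the lemma.

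The main obstacle is the bookkeeping around the cancellation: both $\binom{n}{g}$ and $f(n-g)/f(n)$ are individually far from tame, so one must track the linear-in-$g$ terms precisely enough to see that, after the $-g$ versus $+g$ cancellation, the leading behavior is genuinely the negative linear term $-(1-\tfrac1m)n$, while all three positive contributions (the binomial bound, the $O(g^2/n)$ Taylor remainder, and any growth of $S$) are strictly $o(n)$.
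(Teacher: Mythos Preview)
Your proof is correct. Both you and the paper invoke Lemma~\ref{lem:upper bound on nChoose_k} to control $\binom{n}{g}$, but you handle $f(n-g)/f(n)$ differently. The paper argues by crude inequalities: it writes the ratio as a product of three factors, observes that $(n-g)^{nk}/n^{nk} < 1$ and $e^{h(n-g)}/e^{h(n)} < 1$ since $h$ is increasing, and then shows the remaining factor $e^{3g\log\log n + gk}/(n-g)^{gk}$ tends to $0$ by comparing exponents. You instead Taylor-expand $\log f$ and identify the precise leading term $-(1-\tfrac1m)n$ in the logarithm, checking that every other contribution (the binomial, the $O(g^2/n)$ remainder, and $S(n-g)-S(n)$) is $o(n)$. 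Your route is a bit more analytical and actually pins down the rate at which the quantity decays, whereas the paper's route is more elementary and avoids any asymptotic expansion; either suffices here.
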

\begin{proof}
	By Lemma \ref{lem:upper bound on nChoose_k}, it is sufficient to show 
	\[
	\frac{e^{3 g(n) \log \log (n)}f(n - g(n))}{f(n)} \longrightarrow 0.
	\]
	
	Let $h(n) := \sum_{\substack{d < m \\ d|m}} \frac{n^{d/m}}{d}$ and $k = 1-1/m$. We have that 
	\[
	f(n) = \frac{K_m n^{nk} e^{h(n)}}{e^{nk}}.
	\]
	Therefore,
	\[
	\begin{aligned}
	\frac{e^{3 g(n) \log \log n}f(n - g(n))}{f(n)} 
	&= \frac{e^{3g(n) \log \log n} (n-g(n))^{(n-g(n))k} e^{h(n-g(n))}e^{nk}}{n^{nk}e^{(n-g(n))k}e^{h(n)}}\\
	&= \frac{(n-g(n))^{nk}}{n^{nk}} \cdot \frac{e^{h(n-g(n))}}{e^{h(n)}} \cdot \frac{e^{3g(n) \log \log n + g(n)k}}{(n-g(n))^{g(n)k}}\\
	&< \frac{e^{3g(n) \log \log n + g(n)k}}{(n-g(n))^{g(n)k}}\\
	\end{aligned}
	\]
	For $n > e^e$ we have $\log \log n > 1 > 1 - 1/m = k$ 
	(and also $n - g(n) > n/e$, since recall that $g(n) =  \left \lfloor n/\log n \right \rfloor $). Therefore for such
	$n$ we have
	\[
	\begin{aligned}
	\frac{e^{3g(n) \log \log n + g(n)k}}{(n-g(n))^{g(n)k}}
	&< \frac{e^{4g(n)\log \log n}}{(n/e)^{g(n)k}}\\
	&= \frac{e^{4g(n)\log \log n + g(n)k}}{(e^{\log n})^{g(n)k}}\\
	&< \frac{e^{5g(n)\log \log n}}{e^{kg(n)\log n}}\\
	&= \frac{1}{e^{(k\log n - 5\log \log n)g(n)}},
	\end{aligned}
	\]
	which approaches 0 as $n \to \infty$.
\end{proof}

This completes the proof of
Proposition \ref{prop:few fixed points for a random permutation of order m}.

The following is the main theorem in \cite{Babai-Hayes}.  We use ``little $o$'' notation. Also, 
for a permutation group $G \leq \Sym(n)$,
\[\Fix(G) := \text{the number of fixed points of $G$}\] 
\begin{theorem}[Babai \& Hayes, 2006]
	\label{thm:Babai-Hayes}
	Suppose for all large $n$, we have subgroups $G_n \leq \Sym(n)$ with $\Fix(G_n) = o(n)$. 
	Let $\sigma_n \in \Sym(n)$ each be
	chosen at random (with uniform distribution). Then 
	\[\Pr(\langle G_n, \sigma_n \rangle \in \{\Alt(n), \Sym(n) \}) \longrightarrow 1.\]
\end{theorem}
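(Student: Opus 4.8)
The plan is to prove this via the classical three-phase strategy for ``random generation of the giant'' results (going back to Dixon): show that with probability tending to $1$ the group $H_n := \langle G_n, \sigma_n \rangle$ is (i) transitive, then (ii) primitive, and finally (iii) forced to contain $\Alt(n)$. All three phases would condition on the structured factor $G_n$ and use randomness only in $\sigma_n$; the hypothesis $\Fix(G_n) = o(n)$ enters precisely to control the enumerations in phases (i) and (ii). I should flag at the outset that phase (iii) is not elementary: it rests on the classification of finite simple groups (or at least on Jordan-type theorems and CFSG-based order bounds for primitive groups), so ``my'' proof would really be an organization of deep external input rather than a self-contained argument.

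For transitivity (phase (i)), I would union-bound over the proper nonempty $H_n$-invariant subsets $\Omega \subseteq [n]$. Such an $\Omega$ must be $G_n$-invariant (hence a union of $G_n$-orbits) and also $\sigma_n$-invariant, and for a fixed $k$-set the latter has probability $1/\binom{n}{k}$. By complementation it suffices to treat $1 \le k \le n/2$. The singleton case $k=1$ is governed exactly by the fixed points of $G_n$: there are $\Fix(G_n) = o(n)$ of them, contributing $o(n)\cdot n^{-1} = o(1)$. For larger $k$ I would bound the number of $G_n$-invariant $k$-sets by counting unions of orbits; since every nontrivial orbit has size $\ge 2$, the single-orbit contributions sum to at most $\sum_{k\ge 2} (n/k)\binom{n}{k}^{-1} \to 0$, and the multi-orbit unions are controlled by the same rapid decay of $\binom{n}{k}^{-1}$. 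The role of $\Fix(G_n)=o(n)$ is to kill the otherwise-dominant $k=1$ term.

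Primitivity (phase (ii)) is similar but more bookkeeping-heavy. I would union-bound over the nontrivial block systems that a transitive $H_n$ could preserve. A block system into blocks of size $b \mid n$ must be preserved by $\sigma_n$ and be compatible with the $G_n$-orbit structure; one estimates, for each divisor $b$, the probability that a uniform $\sigma_n$ stabilizes some such partition, and shows that the total tends to $0$. Once again $\Fix(G_n) = o(n)$ is what prevents a large reservoir of small invariant pieces from inflating the count.

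Given that $H_n$ is transitive and primitive, phase (iii) is the main obstacle. Here I would invoke Jordan's theorem: a primitive subgroup of $\Sym(n)$ containing a $p$-cycle for a prime $p \le n-3$ must contain $\Alt(n)$. The point is that a uniform random $\sigma_n$ has, with probability $\to 1$, cycle structure from which such a $p$-cycle can be extracted as a power of $\sigma_n$ (for instance, $\sigma_n$ typically has a cycle whose length is a prime in a suitable mid-range, and raising $\sigma_n$ to the product of its other cycle lengths can leave a genuine $p$-cycle). Since this $p$-cycle lies in $H_n$, primitivity plus Jordan forces $H_n \supseteq \Alt(n)$. The hard part is really this last step: making the ``useful cycle'' extraction precise and marshalling the CFSG-dependent structure theory of primitive groups, which is where the genuine depth of the Babai--Hayes theorem resides.
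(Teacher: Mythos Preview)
The paper does not prove this statement at all: Theorem~\ref{thm:Babai-Hayes} is quoted verbatim as ``the main theorem in \cite{Babai-Hayes}'' and is used as a black box (via Corollary~\ref{cor:Babai-Hayes}) in the proof of Theorem~\ref{thm:free product - Z*Z/mZ - maximal subgroup growth}. There is therefore no ``paper's own proof'' to compare your proposal against.

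That said, your three-phase outline (transitive, then primitive, then $\supseteq \Alt(n)$) is the right skeleton for results of this type, and you correctly identify where the hypothesis $\Fix(G_n)=o(n)$ is spent. Two cautions if you intend to flesh this out. First, in phase~(i) the union bound over $G_n$-invariant $k$-sets needs more care than you indicate: with $o(n)$ fixed points one can still build many invariant $k$-sets by sprinkling in fixed points, so the ``multi-orbit unions are controlled by the same rapid decay'' line hides a genuine computation. Second, your phase~(iii) plan of extracting a short $p$-cycle from $\sigma_n$ by powering is delicate, since you need the remaining cycle lengths to be jointly coprime to $p$; the cleaner route (and the one closer in spirit to what Babai and Hayes actually do) is to invoke CFSG-based upper bounds on the order of primitive subgroups of $\Sym(n)$ not containing $\Alt(n)$, and then argue that a uniform $\sigma_n$ lands in any such group with probability $o(1)$ even after a union bound over all of them. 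You are right that this step is where the real depth lies and is not elementary.
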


\begin{corollary}
	\label{cor:Babai-Hayes}
	For all large $n$, let $E_n$ be the event that an element $g_n \in \Sym(n)$ has
	$\Fix(\langle g_n \rangle) \leq \lfloor n/ \log n \rfloor$. Then choosing $\sigma_n \in \Sym(n)$ at random,
	we have 
	\[
	\Pr(\langle g_n, \sigma_n \rangle = \text{a primitive subgroup of } \Sym(n), \text{ given } E_n) \longrightarrow 1
	\]
\end{corollary}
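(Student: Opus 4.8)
The plan is to deduce the corollary directly from the Babai--Hayes theorem (Theorem~\ref{thm:Babai-Hayes}), taking $G_n := \langle g_n \rangle$ and observing that both members of $\{\Alt(n), \Sym(n)\}$ are primitive subgroups of $\Sym(n)$.

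First I would record that the threshold defining $E_n$ is $o(n)$: since $\lfloor n/\log n\rfloor / n \leq 1/\log n \to 0$, any permutation $g_n$ for which $E_n$ holds satisfies $\Fix(\langle g_n\rangle) \leq \lfloor n/\log n\rfloor = o(n)$. Thus each such cyclic subgroup $\langle g_n\rangle$ is an admissible input to Theorem~\ref{thm:Babai-Hayes}.

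The one point that needs care is that Theorem~\ref{thm:Babai-Hayes} is stated for a single deterministic sequence of subgroups, whereas the corollary conditions on $E_n$ (so $g_n$ may range over many permutations) before averaging over $\sigma_n$. To bridge this I would pass to the worst case. For each $n$, among the finitely many $g \in \Sym(n)$ with $\Fix(\langle g\rangle) \leq \lfloor n/\log n\rfloor$, choose $g_n^\ast$ minimizing $\Pr_{\sigma}\bigl(\langle g, \sigma\rangle \in \{\Alt(n), \Sym(n)\}\bigr)$, and denote this minimum by $p_n$. The sequence $\bigl(\langle g_n^\ast\rangle\bigr)_n$ satisfies $\Fix = o(n)$, so Theorem~\ref{thm:Babai-Hayes} applies to it and yields $p_n \to 1$. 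By the defining property of $g_n^\ast$, the quantity $p_n$ is a uniform lower bound for $\Pr_\sigma\bigl(\langle g, \sigma\rangle \in \{\Alt(n),\Sym(n)\}\bigr)$ over all $g$ satisfying $E_n$; this is exactly the uniformity the averaging step requires, and I expect it to be the only real obstacle.

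Finally I would assemble the pieces. For $n \geq 3$ both $\Alt(n)$ and $\Sym(n)$ are primitive subgroups of $\Sym(n)$, so the event $\langle g_n, \sigma_n\rangle \in \{\Alt(n), \Sym(n)\}$ forces $\langle g_n, \sigma_n\rangle$ to be primitive. Writing the conditional probability as an average over $g_n$ (given $E_n$) of $\Pr_\sigma\bigl(\langle g_n, \sigma_n\rangle \text{ primitive}\bigr)$ and bounding each inner term below by $p_n$, I obtain
\[
\Pr\bigl(\langle g_n, \sigma_n\rangle \text{ primitive} \mid E_n\bigr) \;\geq\; p_n \;\longrightarrow\; 1,
\]
which is the desired conclusion.
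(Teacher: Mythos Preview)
The paper does not give a proof of this corollary; it simply records it as an immediate consequence of Theorem~\ref{thm:Babai-Hayes} and moves on. Your argument is correct and in fact more careful than what the paper provides: you explicitly address the point that Theorem~\ref{thm:Babai-Hayes} is phrased for a single deterministic sequence $(G_n)$, whereas the corollary averages over all $g_n$ satisfying $E_n$. Your worst-case reduction (picking $g_n^\ast$ minimizing the success probability and applying Babai--Hayes to that sequence) is the clean way to extract the uniform lower bound needed for the averaging, and the final step using primitivity of $\Alt(n)$ and $\Sym(n)$ is exactly right. So your approach is essentially the intended one, just made rigorous where the paper leaves the details implicit.
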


We will use the following notation: 
\[\trans(G) :=  \text{the number of transitive permutation representations of $G$ of degree $n$}\]
\[\prim(G) :=  \text{the number of primitive permutation representations of $G$ of degree $n$}\]
\begin{lemma}
	\label{lem:subgroup growth via permutation representations}
	With the above notation, for all $n$ we have
	\[\begin{aligned}
	\subgr(G)       &= \trans(G)/(n-1)! \\
	\maxsubgr(G) &= \prim(G)/(n-1)!
	\end{aligned}
	\]
\end{lemma}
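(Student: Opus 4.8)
The plan is to set up a bijective correspondence that turns each transitive permutation representation of degree $n$ into a pair consisting of a subgroup of index $n$ together with a labeling of its coset space, and then to count the labelings. Throughout I regard a permutation representation of degree $n$ as a homomorphism $G \to \Sym(n) = \Sym([n])$, where $[n] = \{1, \dots, n\}$, equivalently as an action of $G$ on $[n]$; such an action is transitive precisely when it has a single orbit.

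First I would send a transitive representation to the subgroup $H := \mathrm{Stab}(1)$, the stabilizer of the point $1 \in [n]$. Transitivity forces the orbit of $1$ to be all of $[n]$, so the orbit--stabilizer theorem gives $[G:H] = n$, i.e.\ $H$ is a subgroup of index $n$. Conversely, starting from a subgroup $H$ of index $n$, the left-multiplication action of $G$ on the coset space $G/H$ is transitive, and any bijection $\phi : G/H \to [n]$ with $\phi(H) = 1$ transports it to a transitive action on $[n]$ whose stabilizer of $1$ is $H$. There are exactly $(n-1)!$ such bijections, since $\phi$ must send the coset $H$ to $1$ and may permute the remaining $n-1$ cosets among $\{2, \dots, n\}$ freely.

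The crux is to show that distinct admissible labelings $\phi$ produce distinct homomorphisms $G \to \Sym([n])$, so that these $(n-1)!$ labelings are genuinely counted without overlap. Two such labelings $\phi, \psi$ yield the same $G$-action exactly when $\psi \circ \phi^{-1}$ is a $G$-equivariant automorphism of $G/H$ fixing the coset $H$. The automorphism group of the transitive $G$-set $G/H$ is $N_G(H)/H$, acting by right translation $gH \mapsto gnH$ with $n \in N_G(H)$, and such an automorphism fixes the base coset $H$ precisely when $n \in H$, i.e.\ only for the identity. Hence distinct labelings give distinct representations. This step---verifying triviality of the base-point-fixing automorphism group---is the one place where a real argument is needed, and I expect it to be the main obstacle; the remainder is bookkeeping.

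Summing over the $\subgr(G)$ subgroups $H$ of index $n$, each contributing exactly $(n-1)!$ transitive representations, gives $\trans(G) = (n-1)!\,\subgr(G)$, which is the first formula. For the second, I would use that $H$ is maximal in $G$ iff no subgroup lies strictly between $H$ and $G$, iff the $G$-set $G/H$ admits no nontrivial invariant partition, iff the associated representation is primitive. Primitivity is an isomorphism invariant of the $G$-set and so is unaffected by relabeling; thus the correspondence above restricts to a bijection between maximal subgroups of index $n$ (each again with its $(n-1)!$ labelings) and primitive representations of degree $n$, yielding $\prim(G) = (n-1)!\,\maxsubgr(G)$.
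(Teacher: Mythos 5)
Your proof is correct: the paper does not prove this lemma itself but cites Proposition 1.1.1 of Lubotzky--Segal, and your argument (stabilizer of the base point, $(n-1)!$ labelings of the remaining cosets, triviality of the base-point-fixing automorphism group of $G/H$, and the blocks--intermediate-subgroups correspondence for the primitive case) is exactly the standard proof given there. No gaps.
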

For a proof, see Proposition 1.1.1 on page 12 of \cite{Lubotzky-and-Segal}.

\begin{theorem}
	\label{thm:free product - Z*Z/mZ - maximal subgroup growth}
	Fix $m \neq 1$. Let $G = \integers * \integers/ m\integers$. Let $f$ be as in 
	Corollary \ref{cor:Muller's asymptotic formula - cyclic groups}. Then
	\[
	\maxsubgr(G) \sim n f(n).
	\]
\end{theorem}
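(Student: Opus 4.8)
The plan is to pass to permutation representations and reduce everything to a counting argument built from the three preceding results. By Lemma~\ref{lem:subgroup growth via permutation representations}, $\maxsubgr(G) = \prim(G)/(n-1)!$, and since $n!/(n-1)! = n$, proving $\maxsubgr(G) \sim n f(n)$ is equivalent to proving $\prim(G) \sim n!\,f(n)$. Now a homomorphism $G = \integers * \integers/m\integers \to \Sym(n)$ is precisely a pair $(\sigma,\tau)$, where $\sigma = \phi(x) \in \Sym(n)$ is arbitrary (the image of the free generator) and $\tau = \phi(y) \in M(n)$ is an element of order dividing $m$ (the image of the torsion generator). Such a representation is primitive exactly when $\langle \sigma, \tau\rangle$ is a primitive subgroup of $\Sym(n)$. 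Hence $\prim(G)$ equals the number $N$ of pairs $(\sigma,\tau)$ with $\sigma \in \Sym(n)$, $\tau \in M(n)$, and $\langle\sigma,\tau\rangle$ primitive. The total number of pairs is $n!\,\lvert M(n)\rvert$, which by Corollary~\ref{cor:Muller's asymptotic formula - cyclic groups} is asymptotic to $n!\,f(n)$; so it suffices to show that almost all pairs are primitive, i.e.\ that $N \sim n!\,\lvert M(n)\rvert$.

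First I would split $M(n)$ into the ``good'' set $M(n)\setminus B(n)$ and the ``bad'' set $B(n)$ of elements with at least $g(n) = \lfloor n/\log n\rfloor$ fixed points. Proposition~\ref{prop:few fixed points for a random permutation of order m} gives $\lvert B(n)\rvert/\lvert M(n)\rvert \to 0$, so the good set has density tending to $1$. For a good $\tau$ we have $\Fix(\langle\tau\rangle) = \Fix(\tau) \le g(n) = o(n)$, so the hypothesis of Corollary~\ref{cor:Babai-Hayes} (equivalently of Theorem~\ref{thm:Babai-Hayes}) is met: choosing $\sigma \in \Sym(n)$ uniformly at random, the probability that $\langle\tau,\sigma\rangle$ is primitive tends to $1$. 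Write this as $\Pr_\sigma(\langle\tau,\sigma\rangle \text{ primitive}) \ge 1 - \varepsilon_n$ with $\varepsilon_n \to 0$.

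The count is then a squeeze. On one side, $N \le n!\,\lvert M(n)\rvert$ trivially. On the other, summing the lower bound over good $\tau$ only,
\[
N \;\ge\; \sum_{\tau \in M(n)\setminus B(n)} \#\{\sigma : \langle\tau,\sigma\rangle \text{ primitive}\} \;\ge\; (1-\varepsilon_n)\,n!\,\bigl(\lvert M(n)\rvert - \lvert B(n)\rvert\bigr).
\]
Dividing through by $n!\,\lvert M(n)\rvert$ gives
\[
1 \;\ge\; \frac{N}{n!\,\lvert M(n)\rvert} \;\ge\; (1-\varepsilon_n)\left(1 - \frac{\lvert B(n)\rvert}{\lvert M(n)\rvert}\right) \longrightarrow 1,
\]
so $N \sim n!\,\lvert M(n)\rvert \sim n!\,f(n)$. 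Therefore $\prim(G) \sim n!\,f(n)$, and dividing by $(n-1)!$ as above yields $\maxsubgr(G) \sim n f(n)$.

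The step I expect to need the most care is the application of Babai--Hayes: the bound $\varepsilon_n \to 0$ must be \emph{uniform} over all good $\tau$, not merely hold for a single prescribed sequence $(\tau_n)$. Since every good $\tau$ satisfies the same fixed-point bound $\Fix(\tau) \le g(n) = o(n)$, and the failure probability in Theorem~\ref{thm:Babai-Hayes} depends only on $n$ and on this fixed-point count, a single $\varepsilon_n$ works for all of them simultaneously; this is exactly what lets me pull $(1-\varepsilon_n)$ out of the sum above. (Implicit throughout is that $\Alt(n)$ and $\Sym(n)$ are primitive, so the Babai--Hayes conclusion $\langle\tau,\sigma\rangle \in \{\Alt(n),\Sym(n)\}$ indeed delivers primitivity.)
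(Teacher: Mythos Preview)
Your proof is correct and follows essentially the same route as the paper's: reduce to $\prim(G) \sim \hom(G)$ via Lemma~\ref{lem:subgroup growth via permutation representations} and Corollary~\ref{cor:Muller's asymptotic formula - cyclic groups}, then combine Proposition~\ref{prop:few fixed points for a random permutation of order m} with Babai--Hayes to show almost every pair $(\sigma,\tau)$ generates a primitive group. The only cosmetic difference is that the paper packages the squeeze as a conditional-probability computation ($\Pr(W_n) \geq \Pr(W_n\mid E_n)\Pr(E_n) \to 1$) via Corollary~\ref{cor:Babai-Hayes}, whereas you write out the explicit sum over good $\tau$; your remark about uniformity in $\tau$ is exactly what that corollary encodes.
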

\begin{proof}
	By Lemma \ref{lem:subgroup growth via permutation representations}, we just need to show
	\[
	\prim(G) \sim n! \cdot f(n).
	\]
	Let $\hom(G) := \lvert \Hom(G, \Sym(n)) \rvert$. We know that 
	\[
	\hom(G) = n! \cdot \lvert \Hom(\integers/m\integers, \Sym(n)) \rvert,
	\]
	and Corollary \ref{cor:Muller's asymptotic formula - cyclic groups} says that  
	$\lvert \Hom(\integers/m\integers, \Sym(n))\rvert \sim f(n)$. So all we need to show is that
	\[\tag{*}
	\prim(G) \sim \hom(G).
	\]
	
	As we shall see, (*) follows from Corollary \ref{cor:Babai-Hayes}, which we can apply
	because of Proposition \ref{prop:few fixed points for a random permutation of order m}. The rest of this proof
	just fills in the details, explaining the previous sentence.
	
	We will use the notation $M(n)$ from Proposition \ref{prop:few fixed points for a random permutation of order m}. 
	Also, let $E_n$ be the event that a random element, $g$, of $M(n)$ has fewer than $\lfloor n/ \log(n) \rfloor$ fixed points. 
	Proposition \ref{prop:few fixed points for a random permutation of order m} (slightly reworded) says that
	\[
	\Pr(E_n) \longrightarrow 1.
	\]
	
	Next, in addition to choosing an element $g$ of $\Sym(n)$ of order dividing $m$, we also will choose a random
	element $\sigma \in \Sym(n)$. Hence, we are just choosing a random homomorphism from $G$ to $\Sym(n)$. 
	Let $W_n$ be the event that $\langle g, \sigma \rangle$ is a primitive subgroup of $\Sym(n)$.
	Because $\Pr(W_n) = \prim(G)/\hom(G)$, notice that in order to prove (*), we need to show that
	\[
	\Pr(W_n) \longrightarrow 1.
	\]
	
	By Corollary \ref{cor:Babai-Hayes}, we know that 
	$\Pr(W_n \text{ given } E_n) \longrightarrow 1$, and Proposition~\ref{prop:few fixed points for a random permutation of order m}
	tells us that $\Pr(E_n) \longrightarrow 1$. Therefore,
	\[
	\Pr(W_n) \geq \Pr(W_n \text{ and } E_n) = \Pr(W_n \text{ given } E_n) \cdot \Pr(E_n) \longrightarrow 1.
	\]
\end{proof}

We almost have Theorem \ref{thm:baumslag-solitar - maximal subgroup growth}, which 
gives the
maximal subgroup growth of the Baumslag-Solitar groups. The only thing we need to do first is show that these groups have
very few maximal subgroups that are not contained in the quotient   $\integers * \integers/ m\integers$. In other words,
our goal is to show that
Theorem \ref{thm:free product - Z*Z/mZ - maximal subgroup growth} is sufficient to count almost all of the
maximal subgroups. To achieve this goal (in Lemma~\ref{lemma:baumslag-solitar - polynomial bound on the maximal subgroups of complent type}), 
we state another lemma first.

\begin{lemma} 
	\label{lem:abelian by something - counting maximal subgroups by derivations}
	Let $G$ be a f.g.\ group with $A \normal G$ and $A$ abelian. Then
	\[ \tag{*}
	\maxsubgr(G) \leq \maxsubgr(G/A) + \sum_{A_0} \lvert\Der(G/A, A/A_0)\rvert
	\]
	where the sum is taken over all $A_0$ such that $A_0 \normal G$, $A_0 \leq A$ and such that $A/A_0$ is a simple $\integers[G/A]$-module with $|A/A_0| = n$. 
	When we have $G \cong A \rtimes G/A$, then the inequality in (*) is an equality.
\end{lemma}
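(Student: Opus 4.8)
The plan is to classify the maximal subgroups $M$ of index $n$ in $G$ by whether or not they contain $A$, matching the two resulting families with the two terms on the right-hand side of (*). If $A \leq M$, then $M/A$ is a maximal subgroup of $G/A$ of index $n$, and conversely the preimage of any maximal subgroup of $G/A$ of index $n$ is such an $M$; this correspondence is a bijection and accounts for the term $\maxsubgr(G/A)$. So the whole content of the lemma lies in the subgroups $M$ with $A \not\leq M$.

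For such an $M$, maximality forces $MA = G$, since $MA$ is a subgroup properly containing $M$. Set $A_0 := M \cap A$. First I would verify $A_0 \normal G$: the subgroup $M$ normalizes $A_0$ because it normalizes both $M$ and $A \normal G$, while $A$ normalizes $A_0$ because $A$ is abelian; as $G = MA$, this gives $A_0 \normal G$. Passing to $G/A_0$, the images satisfy $(M/A_0)(A/A_0) = G/A_0$ and $(M/A_0) \cap (A/A_0) = 1$, so $G/A_0 = (A/A_0) \rtimes (M/A_0)$ with $M/A_0 \cong MA/A = G/A$. The conjugation action of $G/A$ on the abelian group $A/A_0$ makes $A/A_0$ a $\integers[G/A]$-module, and I would show it is simple: a submodule is precisely a $G$-normal subgroup $B$ with $A_0 \leq B \leq A$, and for any such $B$ the subgroup $MB$ lies between $M$ and $G$, so maximality forces $B = A_0$ or $B = A$. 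Finally $[G : M] = [A : A_0] = |A/A_0| = n$, so this $A_0$ is one of the subgroups appearing in the sum.

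Next, for each fixed admissible $A_0$ I would count the maximal subgroups $M$ with $M \cap A = A_0$. Each such $M$ determines the complement $M/A_0$ to $A/A_0$ in $G/A_0$, and distinct $M$ give distinct complements, since $M$ is recovered as the preimage of $M/A_0$. By the standard correspondence for a split extension of an abelian normal subgroup -- complements to $A/A_0$ in $G/A_0$ are in bijection with the $1$-cocycles $\Der(G/A, A/A_0)$ once one complement is fixed as a base point -- the number of complements equals $|\Der(G/A, A/A_0)|$. Hence the number of such $M$ is at most $|\Der(G/A, A/A_0)|$, and summing over all admissible $A_0$ yields the inequality (*). (For an $A_0$ arising from no maximal subgroup the term is simply harmless, which is why one only gets $\leq$ in general: the extension $G/A_0$ need not split.)

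For the equality when $G \cong A \rtimes G/A$, I would argue that no maximal subgroup is lost and no derivation is counted in vain. Since $G$ splits over $A$, the induced extension $G/A_0$ splits over $A/A_0$ for every admissible $A_0$, so all $|\Der(G/A, A/A_0)|$ complements actually occur; it then remains to see that every complement $K/A_0$ to the simple module $A/A_0$ is maximal in $G/A_0$ (equivalently, its preimage is maximal in $G$). If $K/A_0 \leq L/A_0 \leq G/A_0$, Dedekind's modular law gives $L/A_0 = (K/A_0)\bigl((L \cap A)/A_0\bigr)$ with $(L \cap A)/A_0$ a submodule of $A/A_0$, which by simplicity is $0$ or all of $A/A_0$, forcing $L = K$ or $L = G$. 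The hard part will be exactly this simplicity--maximality correspondence: identifying submodules of $A/A_0$ with $G$-normal subgroups between $A_0$ and $A$, and showing complements to a simple module are automatically maximal. Once that is in place, the normality of $A_0$, the semidirect decomposition, and the cocycle count are all routine.
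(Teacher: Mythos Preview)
Your argument is correct and complete. The paper itself does not give a proof of this lemma; it simply records that the statement is Lemma~5 in \cite{Kelley_metabelian_groups}. Your proposal supplies exactly the standard proof one would expect there: partition maximal subgroups by whether they contain $A$, show that those not containing $A$ are complements to $A/A_0$ in $G/A_0$ for a $G$-invariant $A_0$ with $A/A_0$ a simple $\integers[G/A]$-module of order $n$, and then invoke the bijection between complements to an abelian normal subgroup and derivations. The only point worth flagging is already handled implicitly in your write-up: in the inequality direction you need that the map from such $M$ to complements is injective (clear, since $M$ is the full preimage of $M/A_0$) and that the set of complements has size $0$ or $\lvert\Der(G/A,A/A_0)\rvert$; either way the bound holds. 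In the split case your Dedekind/simplicity argument correctly shows every complement is maximal, and since distinct admissible $A_0$ give disjoint families of $M$ (because $A_0 = M\cap A$), the count is exact.
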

Lemma \ref{lem:abelian by something - counting maximal subgroups by derivations} is Lemma 5 in \cite{Kelley_metabelian_groups}.

\begin{lemma}
	\label{lemma:baumslag-solitar - polynomial bound on the maximal subgroups of complent type}
	Let $\Gbar$ and $A$ be as in Section \ref{sec:largest residually finite quotient}.
	Let $\mc = \maxsubgr(\Gbar) - \maxsubgr(\Gbar/A)$. Then 
	\[
	\begin{aligned}
	\mc &= 0         \text{\quad if $n$ is not prime, and}\\
	\mc[p] &\leq p^2 \text{\quad if $p$ is prime}. \\
	\end{aligned}
	\]
\end{lemma}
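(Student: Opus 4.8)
The plan is to feed $G = \Gbar$, together with its abelian normal subgroup $A$, into Lemma~\ref{lem:abelian by something - counting maximal subgroups by derivations}, and then to determine exactly which submodules $A_0$ can occur in the resulting sum. Before doing so I would record that $\mc \geq 0$: by the correspondence theorem, taking preimages gives a bijection between the index-$n$ maximal subgroups of $\Gbar/A$ and the index-$n$ maximal subgroups of $\Gbar$ that contain $A$, so in fact $\mc$ counts precisely the index-$n$ maximal subgroups of $\Gbar$ that do \emph{not} contain $A$. Lemma~\ref{lem:abelian by something - counting maximal subgroups by derivations} then yields
\[
\mc \leq \sum_{A_0} \lvert \Der(\Gbar/A, A/A_0) \rvert,
\]
the sum ranging over $A_0 \normal \Gbar$ with $A_0 \leq A$ and $A/A_0$ a simple $\integers[\Gbar/A]$-module of order $n$.

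The heart of the argument is to classify these $A_0$. Since $\lvert A/A_0 \rvert = n$ is finite, $A_0$ is a finite-index subgroup of $A \cong \integers[1/(uv)]$, and by Lemma~\ref{lem:Z[1/k] has at most 1 subgroup of any index} the only such subgroups are $A_0 = nA$ with $\gcd(n, uv) = 1$, giving $A/A_0 \cong \integers/n\integers$. Because $x$ acts trivially on $A$ and $y$ acts by the unit $u/v$, each $nA$ is automatically a submodule. To decide simplicity I would observe that the $\integers[\Gbar/A]$-submodules of $\integers/n\integers$ are exactly the subgroups invariant under multiplication by the image of $u/v$; but every subgroup of a finite cyclic group is characteristic, so that invariance is automatic, and the submodules are simply the subgroups. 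Thus $A/nA$ is simple precisely when $\integers/n\integers$ has no proper nontrivial subgroup, i.e.\ precisely when $n = p$ is prime (and then necessarily $p \nmid uv$), in which case $A_0 = pA$ is the unique contributing submodule.

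This already settles the first claim: when $n$ is not prime the indexing set is empty, so $0 \leq \mc \leq 0$. When $n = p$ is prime, either $p \mid uv$, forcing $\mc[p] = 0$, or $p \nmid uv$ and $\mc[p] \leq \lvert \Der(\Gbar/A, \integers/p\integers) \rvert$. To bound this last quantity I would write $\Gbar/A \cong (\integers/d\integers) * \integers$ and use that derivations out of a free product split as $\Der(G_1 * G_2, M) \cong \Der(G_1, M) \oplus \Der(G_2, M)$ (a derivation is determined by its restrictions to the free factors, and conversely any pair of restrictions glues, since the only relations live inside the factors). Here $\Der(\integers, \integers/p\integers) \cong \integers/p\integers$, since a derivation on the free group $\integers$ is a free choice of the image of $y$, of order $p$, while $\Der(\integers/d\integers, \integers/p\integers) = \Hom(\integers/d\integers, \integers/p\integers)$ (the $x$-action being trivial) has order $\gcd(d, p) \leq p$. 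Multiplying gives $\lvert \Der(\Gbar/A, \integers/p\integers) \rvert = \gcd(d,p)\cdot p \leq p^2$, hence $\mc[p] \leq p^2$.

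I expect the main obstacle to be the simplicity analysis of the second paragraph: the key point is that although $y$ acts nontrivially, it acts by a unit, and so neither creates nor destroys submodules of the cyclic group $\integers/n\integers$; this is what forces a simple quotient to have prime order. Once that is in hand, the derivation count reduces to the short computation above via the free-product splitting.
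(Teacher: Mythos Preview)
Your proof is correct and follows essentially the same approach as the paper: both feed $\Gbar$ and $A$ into Lemma~\ref{lem:abelian by something - counting maximal subgroups by derivations}, use Lemma~\ref{lem:Z[1/k] has at most 1 subgroup of any index} to see that simple module quotients of $A$ occur only in prime index, and then bound $\lvert \Der(\Gbar/A,\, \Z/p\Z) \rvert$ by $p^2$. The paper's derivation bound is the one-liner that a derivation is determined by the images of the two generators of $\Z * \Z/m\Z$, while you obtain the slightly sharper value $\gcd(d,p)\cdot p$ via the free-product decomposition of $\Der$; you also make explicit the inequality $\mc \geq 0$ and the simplicity analysis that the paper leaves implicit.
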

\begin{proof}
	Because $\Z[1/(uv)]$ has no maximal submodules that are not of prime index, 
	Lemma~\ref{lem:abelian by something - counting maximal subgroups by derivations}
	implies that $\mc = 0$ for such $n$. 
	
	Let $n =p$ be prime. We know that $\Z[1/(uv)]$ has at most 1 maximal ideal of index $p$ 
	(by, say Lemma \ref{lem:Z[1/k] has at most 1 subgroup of any index}). Therefore, to show that 
	$\mc[p] \leq p^2$, by
	Lemma~\ref{lem:abelian by something - counting maximal subgroups by derivations}, we just need to show that
	$\lvert \Der(\Z * \Z/m\Z, \Z/p\Z)\rvert \leq p^2$. This is immediate because the number of functions
	from a two element generating set of $\Z * \Z/m\Z$ to $\Z/p\Z$ is at most $p^2$.
	
\end{proof}

\begin{theorem}
	\label{thm:baumslag-solitar - maximal subgroup growth}
	Let $m = \gcd(a,b) > 1$. 
	Then
	\[
	\maxsubgr(\Gab{a}{b}) \sim K_m n^{(1-1/m)n + 1} \exp\left(-(1 - 1/m)n + \sum_{\substack{d < m \\ d|m}} \frac{n^{d/m}}{d} \right),
	\]
	where
	\[
	K_m :=
	\begin{cases}
	m^{-1/2} & \text{if $m$ is odd}\\
	m^{-1/2}e^{-1/(2m)} & \text{otherwise.}
	\end{cases}
	\]
\end{theorem}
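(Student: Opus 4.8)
The plan is to reduce the entire computation to the quotient $\Gbar/A \cong \integers * \integers/m\integers$, whose maximal subgroup growth is given by Theorem~\ref{thm:free product - Z*Z/mZ - maximal subgroup growth}, and then to check that the maximal subgroups not containing $A$ contribute only a negligible error.

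The first step is to replace $\Gab{a}{b}$ by its largest residually finite quotient, i.e.\ to note that $\maxsubgr(\Gab{a}{b}) = \maxsubgr(\Gbar)$. This holds because every subgroup of finite index in $\Gab{a}{b}$ contains $\Res(\Gab{a}{b})$ by definition of the latter, so the correspondence theorem yields a bijection between index-$n$ subgroups of $\Gab{a}{b}$ and index-$n$ subgroups of $\Gbar$; this bijection preserves maximality, and any maximal subgroup of index $n$ is finite index and hence among these. Thus counting maximal subgroups of index $n$ in $\Gab{a}{b}$ is the same as counting them in $\Gbar$.

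Next, using the notation of Lemma~\ref{lemma:baumslag-solitar - polynomial bound on the maximal subgroups of complent type}, I would write
\[
\maxsubgr(\Gbar) = \maxsubgr(\Gbar/A) + \mc.
\]
By Corollary~\ref{cor:Gbar/A is a free product} we have $\Gbar/A \cong \integers * \integers/m\integers$, so Theorem~\ref{thm:free product - Z*Z/mZ - maximal subgroup growth} gives $\maxsubgr(\Gbar/A) \sim n f(n)$, where $f$ is the function of Corollary~\ref{cor:Muller's asymptotic formula - cyclic groups}. Unwinding the definition of $f$, one checks immediately that $n f(n)$ is precisely the right-hand side of the asserted formula (the extra factor $n$ shifts the exponent $(1-1/m)n$ to $(1-1/m)n + 1$, and the constant $K_m$ and the $\exp(\cdots)$ factor are unchanged). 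Hence it only remains to show that $\mc$ is negligible compared with $n f(n)$.

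This final step is where the earlier work pays off, and I expect no genuine obstacle. Lemma~\ref{lemma:baumslag-solitar - polynomial bound on the maximal subgroups of complent type} says that $\mc = 0$ unless $n$ is prime and $\mc[p] \leq p^2$ when $n = p$, so in every case $\mc \leq n^2$, a polynomial bound. On the other hand $n f(n)$ grows super-exponentially: since $m > 1$ we have $1 - 1/m > 0$, so the factor $n^{(1-1/m)n}$ alone dominates any fixed power of $n$. Therefore $\mc / (n f(n)) \to 0$, and combining this with the previous paragraph gives
\[
\maxsubgr(\Gab{a}{b}) = \maxsubgr(\Gbar/A) + \mc \sim n f(n),
\]
which is the claimed asymptotic. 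The one point worth stating explicitly is that a super-exponential main term swallows the polynomial error; the real difficulty was already absorbed into Proposition~\ref{prop:few fixed points for a random permutation of order m} and the Babai and Hayes input behind Theorem~\ref{thm:free product - Z*Z/mZ - maximal subgroup growth}, and into the bound of Lemma~\ref{lemma:baumslag-solitar - polynomial bound on the maximal subgroups of complent type}, so the present assembly is essentially bookkeeping.
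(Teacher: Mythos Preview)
Your proposal is correct and follows essentially the same approach as the paper: reduce to $\Gbar$, split $\maxsubgr(\Gbar) = \maxsubgr(\Gbar/A) + \mc$, invoke Theorem~\ref{thm:free product - Z*Z/mZ - maximal subgroup growth} for the main term $nf(n)$, and use Lemma~\ref{lemma:baumslag-solitar - polynomial bound on the maximal subgroups of complent type} to see that $\mc$ is at most quadratic and hence negligible. Your write-up is in fact slightly more explicit than the paper's (you spell out why $\maxsubgr(\Gab{a}{b}) = \maxsubgr(\Gbar)$ and why the super-exponential main term dominates the polynomial error), but the argument is the same.
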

\begin{proof}
	Let $f$ be as in Corollary~\ref{cor:Muller's asymptotic formula - cyclic groups}, $\Gbar$ from immediately after
	Theorem~\ref{thm:Moldavanskii on Res(Baumslag-Solitar)}, $A$ from 
	Lemma~\ref{lem:Gab has A as an abelian normal subgroup}, and
	$\mc$ as in Lemma~\ref{lemma:baumslag-solitar - polynomial bound on the maximal subgroups of complent type}.
	
	We know that $\maxsubgr(\integers * \integers/m\integers) \leq \maxsubgr(\Gab{a}{b})$, because 
	Corollary~\ref{cor:Gbar - short exact sequence - with stated action} tells us that $\Gbar/A \cong \integers * \integers/m\integers$. 
	So by Theorem~\ref{thm:free product - Z*Z/mZ - maximal subgroup growth}, we observe that $\maxsubgr(\Gab{a}{b})  = \maxsubgr(\Gbar)$ grows at least as fast as
	$nf(n)$.
	
	By definition of $\mc$ (in Lemma~\ref{lemma:baumslag-solitar - polynomial bound on the maximal subgroups of complent type}) 
	we can write 
	\[
	\maxsubgr(\Gbar) = \maxsubgr(\Gbar/A) + \mc
	\]
	
	We are done because Lemma~\ref{lemma:baumslag-solitar - polynomial bound on the maximal subgroups of complent type}
	gives us that $\mc$ is bounded above by a polynomial of degree 2.	
\end{proof}

\section{When $\gcd(a,b) \neq 1$: an asymptotic formula for $\subgr(\Gab{a}{b})$}
\label{sec:when gcd(a,b) > 1, formula for subgr(Gab)}
The goal of this section is Theorem~\ref{thm:baumslag-solitar - subgroup growth}. 
In this section, we will again denote $\gcd(a,b)$ by $m$, and we assume $m > 1$. The following result follows from the proof of
Theorem~\ref{thm:free product - Z*Z/mZ - maximal subgroup growth}.

\begin{corollary}
	\label{cor:subgr(Z * Z/mZ) is asymptotic to nf(n)}
	Let $G = \integers * \integers/m\integers$. Let $f$ be as in Corollary~\ref{cor:Muller's asymptotic formula - cyclic groups}. Then
	\[
	\subgr(G) \sim nf(n). 
	\]
\end{corollary}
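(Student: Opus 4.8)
The plan is to reuse the argument from the proof of Theorem~\ref{thm:free product - Z*Z/mZ - maximal subgroup growth} almost verbatim, replacing ``primitive'' by ``transitive'' and inserting one elementary sandwich inequality. Writing $\hom(G) := \lvert \Hom(G, \Sym(n)) \rvert$, I would first record the chain of inclusions among the degree-$n$ permutation representations of $G$: every primitive representation is transitive, and every transitive representation is in particular a homomorphism to $\Sym(n)$. At the level of counts this gives, for every $n$,
\[
\prim(G) \leq \trans(G) \leq \hom(G).
\]

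Next I would invoke the two facts already established inside the proof of Theorem~\ref{thm:free product - Z*Z/mZ - maximal subgroup growth}. First, that proof shows $\prim(G) \sim \hom(G)$; indeed it proves $\Pr(W_n) = \prim(G)/\hom(G) \to 1$, where $W_n$ is the event that a uniformly random homomorphism $G \to \Sym(n)$ has primitive image. Dividing the displayed inequality by $\hom(G)$ then squeezes $\trans(G)/\hom(G)$ between $\prim(G)/\hom(G) \to 1$ and $1$, so $\trans(G) \sim \hom(G)$ as well. Second, that proof computes $\hom(G) = n! \cdot \lvert \Hom(\integers/m\integers, \Sym(n)) \rvert \sim n!\, f(n)$ via Corollary~\ref{cor:Muller's asymptotic formula - cyclic groups}, whence $\trans(G) \sim n!\, f(n)$. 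Applying the transitive half of Lemma~\ref{lem:subgroup growth via permutation representations} then yields
\[
\subgr(G) = \frac{\trans(G)}{(n-1)!} \sim \frac{n!\, f(n)}{(n-1)!} = n\, f(n),
\]
which is the claim.

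Since the genuinely hard estimate, $\prim(G) \sim \hom(G)$, is already in hand, I do not expect any real obstacle: the entire content is the observation that $\trans(G)$ is trapped between $\prim(G)$ and $\hom(G)$, both of which carry the same asymptotic $n!\, f(n)$. The single point I would check with care is that $\prim(G) \leq \trans(G) \leq \hom(G)$ holds exactly (not merely asymptotically), which is immediate from the inclusions of the corresponding sets of permutation representations.
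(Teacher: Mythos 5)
Your proposal is correct and follows essentially the same route as the paper's own proof: both reduce to showing $\trans(G) \sim \hom(G)$ via the sandwich $\prim(G) \leq \trans(G) \leq \hom(G)$ together with the estimate $\prim(G) \sim \hom(G)$ already established in the proof of Theorem~\ref{thm:free product - Z*Z/mZ - maximal subgroup growth}, and then convert to $\subgr(G)$ by Lemma~\ref{lem:subgroup growth via permutation representations}. No gaps.
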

\begin{proof}
	By Lemma \ref{lem:subgroup growth via permutation representations}, we just need to show
	\[
	\trans(G) \sim n! \cdot f(n).
	\]
	Let $\hom(G) := \lvert \Hom(G, Sym(n)) \rvert$. We know that 
	\[
	\hom(G) = n! \cdot \lvert \Hom(\integers/m\integers, \Sym(n)) \rvert,
	\]
	and Corollary \ref{cor:Muller's asymptotic formula - cyclic groups} says that  
	$\lvert \Hom(\integers/m\integers, \Sym(n))\rvert \sim f(n)$. So all we need to show is that
	\[\tag{*}
	\trans(G) \sim \hom(G).
	\]
	
	In the proof of Theorem~\ref{thm:free product - Z*Z/mZ - maximal subgroup growth}, 
	we showed that $\prim(G) \sim \hom(G)$. We have that $\prim(G) \leq \trans(G) \leq \hom(G)$. Therefore 
	$\trans(G) \sim \hom(G)$.
\end{proof}

Let $A$ be as in Section 2. The purpose of Lemmas~\ref{lem:subgr(G) - upper bound using derivations - Shalev} through
\ref{lem:fraction in terms of g(n) approaches 0} is to show that the vast majority of all subgroups
of $\Gab{a}{b}$ (of any fixed, large index) contain $A$. This implies that the formula in 
Corollary~\ref{cor:subgr(Z * Z/mZ) is asymptotic to nf(n)} also works for $\subgr(\Gab{a}{b})$.

\begin{lemma}
	\label{lem:subgr(G) - upper bound using derivations - Shalev}
	Let $G$ be a group, and let $A \normal G$ with $A$ abelian. Then
	\[
	  \subgr(G) \leq \sum_{d \divides n} \subgr[n/d](G/A) \subgr[d](A) D_{n,d},
	\]
	where $D_{n,d} = \max_{A_0,G_0} \lvert \Der(G_0/A, A/A_0) \rvert$, where the $\max$ is over the subgroups
	$A_0 \leq A \leq G_0 \leq G$ with $[A : A_0] = d$, $[G : G_0] = n/d$, and $A_0 \normal G_0$.
\end{lemma}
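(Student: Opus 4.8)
The lemma wants to bound $a_n(G)$ where $G$ has abelian normal subgroup $A$. A subgroup of index $n$ corresponds (in the world of subgroups containing various things) to... let me think about the standard approach.

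We have $A \trianglelefteq G$, $A$ abelian. We want to count subgroups $H \leq G$ of index $n$.

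For each such $H$, consider $G_0 = HA$ (the subgroup generated by $H$ and $A$). Then $A \leq G_0 \leq G$. Also consider $A_0 = H \cap A$. Then $A_0 \leq A$.

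We have $[G:G_0] \cdot [G_0 : H] = n$. And $[G_0 : H] = [HA : H] = [A : H\cap A] = [A : A_0]$ (by second iso theorem, since $A \trianglelefteq G$ so $A \trianglelefteq G_0$ and $HA/H \cong A/(A\cap H)$). Wait, need $H \leq G_0 = HA$, yes.

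So if we set $d = [A : A_0] = [G_0 : H]$ and $n/d = [G:G_0]$, then $[G:H] = n$. Good.

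Now $A_0 = H \cap A$ is normal in $G_0$? Since $H \leq G_0$, $H$ normalizes... hmm, $A_0 = H \cap A$. Is $A_0 \trianglelefteq G_0$? We have $G_0 = HA$. $A \trianglelefteq G_0$. $H$ acts on $A$ by conjugation. $A_0 = H \cap A$. For $h \in H$: $h A_0 h^{-1} = h(H\cap A)h^{-1} = (hHh^{-1}) \cap (hAh^{-1})$. But $hHh^{-1}$ need not be $H$. Hmm. But $H \cap A$: for $h \in H$, $h(H\cap A)h^{-1} \subseteq H \cap A$? We need $h x h^{-1} \in H$ (yes if $x \in H$) and $\in A$ (yes since $A$ normal). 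So $h(H\cap A)h^{-1} \subseteq H \cap A$. For $a \in A$: $a(H\cap A)a^{-1}$. Since $A$ abelian and $H\cap A \subseteq A$, $a$ commutes with $H\cap A$. So $A_0 \trianglelefteq G_0$. Good.

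So each $H$ determines a triple $(A_0, G_0, \text{stuff})$ with $A_0 \trianglelefteq G_0$, $[A:A_0]=d$, $[G:G_0]=n/d$, $A_0 \leq A \leq G_0$.

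Now given such $A_0, G_0$, how many $H$ map to them? $H$ must satisfy $H \cap A = A_0$ and $HA = G_0$. Such $H$ correspond to complements-ish... Actually subgroups $H$ of $G_0$ with $HA = G_0$ and $H \cap A = A_0$, i.e., $H/A_0$ is a complement to $A/A_0$ in $G_0/A_0$.

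The number of complements to an abelian normal subgroup $A/A_0$ in $G_0/A_0$ is at most $|\text{Der}(G_0/A, A/A_0)|$ (derivations, i.e., when a complement exists they're a torsor under $Z^1$, but if none exists it's 0; in all cases $\leq |Z^1| = |\text{Der}|$). Here $G_0/A \cong (G_0/A_0)/(A/A_0)$ acts on $A/A_0$.

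So number of $H$ mapping to $(A_0,G_0)$ is $\leq |\text{Der}(G_0/A, A/A_0)| \leq D_{n,d}$.

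Then sum: $a_n(G) \leq \sum_{d|n} \sum_{(A_0,G_0)} D_{n,d}$ where $A_0$ ranges over index-$d$ subgroups... but we need them normal in $G_0$. The count of pairs: number of $A_0$ of index $d$ in $A$ times number of $G_0$ of index $n/d$ containing $A$. Number of index-$d$ subgroups of $A$ is $a_d(A)$. Number of $G_0$ with $A \leq G_0 \leq G$, $[G:G_0]=n/d$ equals $a_{n/d}(G/A)$.

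But wait — we need $A_0 \trianglelefteq G_0$, which is automatic as shown. And the bound is a product, with $D_{n,d}$ as the max. So:

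$a_n(G) \leq \sum_{d|n} a_{n/d}(G/A)\, a_d(A)\, D_{n,d}$.

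That matches the statement (with $\subgr[n/d](G/A)$, $\subgr[d](A)$).

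The main obstacle: being careful that $A_0$ is normal in $G_0$ and that complements are bounded by derivations.

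---

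The plan is to parametrize each index-$n$ subgroup $H \leq G$ by the pair of subgroups it induces relative to $A$, then bound the fibre of this parametrization by a derivation count. First I would associate to each $H \leq G$ of index $n$ the two subgroups $G_0 := HA$ and $A_0 := H \cap A$. Since $A \normal G$, the second isomorphism theorem gives $[G_0 : H] = [HA : H] = [A : A \cap H] = [A : A_0]$; writing $d := [A : A_0]$, multiplicativity of index yields $[G : G_0] = n/d$, so $d \divides n$ and $A_0 \leq A \leq G_0 \leq G$ as required. A short check shows $A_0 \normal G_0$: elements of $H$ preserve $A_0 = H \cap A$ (they preserve $H$ trivially and $A$ because $A \normal G$), while elements of $A$ centralize $A_0 \leq A$ since $A$ is abelian, and together $H$ and $A$ generate $G_0$.

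Next I would count, for a fixed admissible pair $(A_0, G_0)$, how many $H$ give rise to it. Any such $H$ satisfies $H \cap A = A_0$ and $HA = G_0$, so its image $H/A_0$ is a complement to the abelian normal subgroup $A/A_0$ inside $G_0/A_0$. The standard cohomological fact is that the set of complements to an abelian normal subgroup, \emph{when nonempty}, is a torsor under the group of derivations $Z^1 = \Der(G_0/A, A/A_0)$, where $G_0/A \cong (G_0/A_0)/(A/A_0)$ acts on $A/A_0$; in particular the number of such $H$ is at most $\lvert \Der(G_0/A, A/A_0)\rvert$, which is at most $D_{n,d}$ by definition of the maximum. (Because we only want an upper bound, the possibility that no complement exists and the fibre is empty only helps.)

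Finally I would assemble the count. The number of admissible $A_0$ — index-$d$ subgroups of $A$ — is $\subgr[d](A)$, and the number of admissible $G_0$ — subgroups of index $n/d$ lying between $A$ and $G$ — is exactly $\subgr[n/d](G/A)$ via the correspondence theorem. Summing the fibre bound over all pairs and over all $d \divides n$ gives
\[
\subgr(G) \;\leq\; \sum_{d \divides n} \subgr[n/d](G/A)\,\subgr[d](A)\,D_{n,d},
\]
as claimed. The only genuinely delicate point is the derivation bound on the fibre: one must verify that the relevant action of $G_0/A$ on $A/A_0$ is well defined (it is, since $A$ is abelian, so $A$ acts trivially on $A/A_0$ and the action factors through $G_0/A$) and invoke the complements-form-a-$Z^1$-torsor statement in the inequality-only direction. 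Everything else is a bookkeeping exercise in the lattice-correspondence and multiplicativity of indices, so I expect that cohomological step to be where the care is needed.
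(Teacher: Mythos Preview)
Your proof is correct and is the standard argument. The paper itself does not supply a proof of this lemma; it merely cites Shalev (Lemma~2.1(ii) in \cite{Shalev-On_the_degree}), so you have in fact filled in the details that the paper omits. Your decomposition $H \mapsto (A_0, G_0) = (H \cap A, HA)$, the verification that $A_0 \normal G_0$, and the bound on the fibre by $\lvert \Der(G_0/A, A/A_0) \rvert$ via the complements--derivations correspondence is exactly the argument Shalev uses.
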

This is part of Lemma 2.1 part (ii) in \cite{Shalev-On_the_degree}.

In what follows, $a = um$ and $b = vm$. 
\begin{lemma}
	\label{lem:der(G_0/A,A/A_0) is bounded above by 3^(2n/3)}
  Let $\Gbar$ be the group defined just after Theorem~\ref{thm:Moldavanskii on Res(Baumslag-Solitar)}. Let $A \cong \integers[1/(uv)]$ be the subgroup
  of $\Gbar$ in Corollary~\ref{cor:Gbar - short exact sequence - with stated action} so that $\Gbar/A \cong \integers * \integers/m\integers$.
  Fix $n > 1$, and let $d \divides n$. Let $G_0 \normal \Gbar$ with $[\Gbar : G_0] = n/d$, and let $A_0 \leq_d A$. Then
  \[
  \lvert \Der(G_0/A,A/A_0) \rvert \leq 3^{2n/3}.
  \]
\end{lemma}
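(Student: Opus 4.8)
The plan is to bound the number of derivations by counting the possible images of a generating set of $G_0/A$, then to bound the number of generators of $G_0/A$ by a Reidemeister--Schreier argument, and finally to check that the resulting bound $d^{\,n/d+1}$ does not exceed $3^{2n/3}$.

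First I would recall that any derivation $\delta \in \Der(G_0/A, A/A_0)$ is determined by its values on a generating set of $G_0/A$: the cocycle identity $\delta(gh) = \delta(g) + g\cdot\delta(h)$ propagates $\delta$ from a generating set to all of $G_0/A$. Hence if $G_0/A$ can be generated by $r$ elements, then $\lvert \Der(G_0/A, A/A_0)\rvert \leq \lvert A/A_0\rvert^{r} = d^{r}$, since $[A:A_0] = d$. This is the same principle already used in the proof of Lemma~\ref{lemma:baumslag-solitar - polynomial bound on the maximal subgroups of complent type}, where functions from a two-element generating set were counted.

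Next I would bound $r$. By Corollary~\ref{cor:Gbar - short exact sequence - with stated action}, $\Gbar/A \cong \integers * \integers/m\integers$ is generated by two elements, and (as $A \leq G_0$) the subgroup $G_0/A$ has index $[\Gbar:G_0] = n/d$ in it. Realizing $\Gbar/A$ as a quotient of the free group $F_2$ (with kernel the normal closure of $x^m$), the subgroup $G_0/A$ pulls back to a subgroup $\tilde H \leq F_2$ of the same index $n/d$; by Nielsen--Schreier $\tilde H$ is free of rank $1 + (n/d)(2-1) = n/d + 1$, and since $G_0/A$ is a quotient of $\tilde H$ it is generated by at most $n/d + 1$ elements. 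Combining with the previous step gives $\lvert \Der(G_0/A, A/A_0)\rvert \leq d^{\,n/d+1}$.

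Finally I would verify the elementary inequality $d^{\,n/d+1} \leq 3^{2n/3}$. Taking logarithms and factoring gives $(n/d+1)\log d = (n+d)\,\frac{\log d}{d}$. Since $d \mid n$ forces $d \leq n$, we have $n + d \leq 2n$; and the function $t \mapsto (\log t)/t$ is maximized over the positive integers at $t = 3$ (it increases up to $t=e$ and decreases thereafter, and $(\log 3)/3 > (\log 2)/2$). Hence $(n+d)\frac{\log d}{d} \leq 2n\cdot \frac{\log 3}{3} = \frac{2n}{3}\log 3 = \log\!\bigl(3^{2n/3}\bigr)$, as required. I expect the genuine content to be the generator count, so the main care is in justifying the $n/d+1$ bound for the non-free group $\Gbar/A$ (which is why I route it through $F_2$); the derivation estimate is routine, and the final inequality is an elementary optimization tuned so that $3^{2n/3}$ is exactly tight at $n = d = 3$.
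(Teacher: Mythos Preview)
Your proof is correct and follows essentially the same approach as the paper: bound the number of derivations by $|A/A_0|^{d(G_0/A)}$, bound $d(G_0/A)$ via the Schreier index formula (the paper cites this directly rather than routing through $F_2$, but it is the same bound $1 + n/d$), and finish with the elementary fact that $d^{1/d}$ is maximized over $\mathbb{N}$ at $d=3$. The only cosmetic difference is that the paper relaxes $1 + n/d$ to $2n/d$ before the last step, making the final inequality $d^{2n/d} = (d^{1/d})^{2n} \leq 3^{2n/3}$ immediate, whereas you keep the sharper $n/d+1$ and compensate with the factorization $(n+d)\tfrac{\log d}{d} \leq 2n\cdot\tfrac{\log 3}{3}$.
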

Note that this result basically follows from the proof of Proposition 1.3.2 part (i) in \cite{Lubotzky-and-Segal}.
\begin{proof}
	Recall that for a f.g.\ group $H$, we let $d(H)$ denote the minimal size of a generating set for $H$. Hopefully this notation will not be confusing because
	$n/d$ is the index of $G_0$ in $\Gbar$.
	
	We have that $2 = d(\Gbar/A)$. By Schreier's formula (Result 6.1.1 in \cite{Robinson}), we have that
	\[
	d(G_0/A) \leq 1 + [\Gbar : G_0](2 -1) = 1 + \frac{n}{d} \leq \frac{2n}{d}.
	\]
	Therefore,
	\[
	  \lvert \Der(G_0/A,A/A_0) \rvert \leq |A/A_0|^{d(G_0/A)} \leq d^{2n/d} \leq 3^{2n/3},
	\]
	since $d^{1/d} \leq 3^{1/3}$ for every $d \in \mathbb{N}$.
\end{proof}

\begin{lemma}
	\label{lem:bounding subgr(G) above and below by g(n)}
  Let $G = \Gab{a}{b}$ (with $m = \gcd(a,b) > 1$). Let $f$ be as in Corollary~\ref{cor:Muller's asymptotic formula - cyclic groups}, and let 
  $g(n) = nf(n)$. Let $\varepsilon > 0$. Then for all large $n$,
  \[
   (1 - \varepsilon)g(n) \leq \subgr(G) \leq (1 + \varepsilon)g(n) + (1 + \varepsilon)ng(n/2) 3^{2n/3}.
  \]
\end{lemma}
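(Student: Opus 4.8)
The plan is to bound $\subgr(G)$ from below and above separately, using that $\subgr(G) = \subgr(\Gbar)$ (since $\Gbar$ is the largest residually finite quotient, and finite-index subgroups all contain $\Res(\Gab{a}{b})$). The lower bound is the easy half: every subgroup of $\Gbar/A \cong \integers * \integers/m\integers$ of index $n$ pulls back to a subgroup of $\Gbar$ of index $n$, so $\subgr(G) \geq \subgr[n](\integers * \integers/m\integers)$. By Corollary~\ref{cor:subgr(Z * Z/mZ) is asymptotic to nf(n)} the right-hand side is asymptotic to $nf(n) = g(n)$, so for all large $n$ it exceeds $(1-\varepsilon)g(n)$. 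That disposes of the leftmost inequality.

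For the upper bound I would invoke Lemma~\ref{lem:subgr(G) - upper bound using derivations - Shalev} applied to $\Gbar$ with its abelian normal subgroup $A$, giving
\[
\subgr(\Gbar) \leq \sum_{d \divides n} \subgr[n/d](\Gbar/A)\, \subgr[d](A)\, D_{n,d},
\]
where $D_{n,d}$ is the maximal size of the relevant derivation group. The strategy is to split this sum into the single term $d=1$ and the remaining terms $d \geq 2$. When $d = 1$ we have $A_0 = A$, so $A/A_0$ is trivial and $D_{n,1} = 1$; also $\subgr[1](A) = 1$. Hence the $d=1$ term is exactly $\subgr[n](\Gbar/A) = \subgr[n](\integers * \integers/m\integers)$, which by Corollary~\ref{cor:subgr(Z * Z/mZ) is asymptotic to nf(n)} is at most $(1+\varepsilon)g(n)$ for large $n$. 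This is the term that produces the main asymptotic.

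For the tail $d \geq 2$ the idea is to show it is absorbed into the error term $(1+\varepsilon)n\,g(n/2)\,3^{2n/3}$. Here Lemma~\ref{lem:Z[1/k] has at most 1 subgroup of any index} gives $\subgr[d](A) \leq 1$, and Lemma~\ref{lem:der(G_0/A,A/A_0) is bounded above by 3^(2n/3)} gives $D_{n,d} \leq 3^{2n/3}$; the surviving factor is $\subgr[n/d](\Gbar/A)$. Since $d \geq 2$ forces $n/d \leq n/2$, and since $\subgr[n/d](\integers * \integers/m\integers)$ is (for large argument) bounded by $(1+\varepsilon)g(n/d) \leq (1+\varepsilon)g(n/2)$ once I know $g$ is eventually increasing, each of these terms is at most $(1+\varepsilon)g(n/2)\,3^{2n/3}$, and there are fewer than $n$ of them. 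Summing gives the claimed bound $(1+\varepsilon)n\,g(n/2)\,3^{2n/3}$.

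The step I expect to require the most care is the tail estimate $\subgr[n/d](\Gbar/A) \leq (1+\varepsilon)g(n/2)$ uniformly over all $d \geq 2$ dividing $n$. Corollary~\ref{cor:subgr(Z * Z/mZ) is asymptotic to nf(n)} is an asymptotic statement about the argument tending to infinity, whereas $n/d$ can be small (e.g.\ a bounded quotient when $d$ is close to $n$); so I must either check that $g$ dominates $\subgr[k](\integers * \integers/m\integers)$ for all $k$ up to an absolute constant factor, or note that small values of $n/d$ contribute negligibly against the dominant $3^{2n/3}$ growth and fold any bounded-range discrepancy into the constant. Establishing the monotonicity of $g(n) = nf(n)$ for large $n$ (so that $g(n/d) \leq g(n/2)$) from the explicit form of $f$ in Corollary~\ref{cor:Muller's asymptotic formula - cyclic groups} is routine but is the point where one must be slightly careful, since $f$ is a product of a super-exponentially growing factor and a decaying exponential.
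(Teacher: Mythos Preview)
Your proposal is correct and follows essentially the same route as the paper: the lower bound via $\subgr[n](\Gbar/A)\le\subgr[n](\Gbar)$ together with Corollary~\ref{cor:subgr(Z * Z/mZ) is asymptotic to nf(n)}, and the upper bound via Lemma~\ref{lem:subgr(G) - upper bound using derivations - Shalev} split into the $d=1$ term and the tail $d\ge2$, using $\subgr[d](A)\le1$, $D_{n,1}=1$, $D_{n,d}\le 3^{2n/3}$, and the eventual monotonicity of $g$. The uniformity issue you flag for small values of $n/d$ is handled in the paper just as you suggest (absorbed once $n$ is large, since the finitely many small arguments give bounded values dominated by $(1+\varepsilon)g(n/2)$); the paper simply asserts that $g$ is increasing without further comment, so your discussion is in fact more careful than the original on this point.
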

\begin{proof}
	Let $\Gbar$ be the group defined just after Theorem~\ref{thm:Moldavanskii on Res(Baumslag-Solitar)}. So $\subgr(\Gbar) = \subgr(G)$.
	 Let $A \cong \integers[1/(uv)]$ be the subgroup
	of $\Gbar$ in Corollary~\ref{cor:Gbar - short exact sequence - with stated action} so that $\Gbar/A \cong \integers * \integers/m\integers$. 
	 Suppose $n$ is large. We have that 
	\[
	  \subgr(\integers * \integers/m\integers) \leq \subgr(\Gbar).
	\]
	
	By Corollary \ref{cor:subgr(Z * Z/mZ) is asymptotic to nf(n)}, $\subgr(\integers * \integers/m\integers) \sim g(n)$. Therefore, since $n$ is large,
	\[
	  (1 - \varepsilon)g(n) \leq \subgr(\integers * \integers/m\integers) \leq \subgr(\Gbar),
	\]
	which proves the lower bound in this lemma.
	
	Next, by Lemma~\ref{lem:subgr(G) - upper bound using derivations - Shalev}, we have that
	\[\tag{*}
	  \subgr(\Gbar) \leq \sum_{d \divides n} \subgr[n/d](\integers * \integers/m\integers)\subgr[d](A) D_{n,d}.
	\]
	We have by Lemma~\ref{lem:Z[1/k] has at most 1 subgroup of any index} that $\subgr[d](A) \leq 1$ for all $d$. Also, $D_{n,1}  = 1$. Further,
	by Lemma~\ref{lem:der(G_0/A,A/A_0) is bounded above by 3^(2n/3)},  $D_{n,d} \leq 3^{2n/3}$. Therefore, from (*) we conclude that
	\[
	  \subgr(\Gbar) \leq \subgr(\integers * \integers/m\integers) + \sum_{\substack{ d \divides n \\ d > 1}} \subgr[n/d](\integers * \integers/m\integers) \cdot 3^{2n/3},
	\]
	which is bounded above by
	\[
	  (1 + \varepsilon)g(n) + \sum_{d = 1}^n (1 + \varepsilon) g(n/2) 3^{2n/3}
	\]
	since $n$ is large, $(\subgr(\integers * \integers/m\integers)) \sim g(n)$, and $g(n)$ is an increasing function. This proves the upper bound of the
	lemma.
\end{proof}

\begin{lemma}
	\label{lem:sum n^(d/m)/d < n}
	Recall $m \geq 2$. For $n \geq m$, we have
	\[
	  \sum_{\substack{d < m\\ d \divides m}} \frac{n^{d/m}}{d} < n.
	\]
\end{lemma}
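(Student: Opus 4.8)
The plan is to bound the sum term-by-term, exploiting two features that are easy to lose sight of: first, every proper divisor $d$ of $m$ has a small exponent $d/m \le 1/2$, and second, the weights $1/d$ contribute a genuinely summable factor. Crucially, merely bounding (number of terms) $\times$ (largest term) is too weak — it would only yield the inequality when $n$ exceeds a fixed multiple of $m$, not for all $n \ge m$ — so the $1/d$ weighting must be retained throughout.

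First I would record that if $d \divides m$ and $d < m$, then $m/d \ge 2$ is an integer, so $d \le m/2$ and hence $d/m \le 1/2$. Since $n \ge m \ge 2 > 1$, the map $t \mapsto n^t$ is increasing, so $n^{d/m} \le n^{1/2} = \sqrt n$ for every term. Pulling this factor out gives
\[
\sum_{\substack{d < m \\ d \divides m}} \frac{n^{d/m}}{d} \;\le\; \sqrt n \sum_{\substack{d < m \\ d \divides m}} \frac{1}{d}.
\]
It therefore suffices to prove the purely arithmetic bound $\sum_{d \divides m,\, d < m} 1/d < \sqrt m$, for then the right-hand side is $< \sqrt n \cdot \sqrt m \le \sqrt n \cdot \sqrt n = n$, where the final step uses the hypothesis $m \le n$.

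To establish $\sum_{d \divides m,\, d<m} 1/d < \sqrt m$, I would note that the proper divisors form a subset of $\{1, 2, \ldots, \lfloor m/2 \rfloor\}$ (again because each is $\le m/2$), so the reciprocal sum is at most the harmonic number $H_{\lfloor m/2\rfloor}$. Using the standard estimate $H_k \le 1 + \log k$, this is at most $1 + \log(m/2)$, and the whole lemma reduces to the elementary one-variable inequality $1 + \log(m/2) < \sqrt m$ for all $m \ge 2$.

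The main obstacle — really the only delicate point — is this last elementary inequality, since everything before it is bookkeeping. I would verify it by calculus: setting $\phi(x) = \sqrt x - 1 - \log(x/2)$, one checks that $\phi'(x) = \tfrac{1}{2\sqrt x} - \tfrac1x$ is negative on $(2,4)$ and positive on $(4,\infty)$, so $\phi$ attains its minimum on $[2,\infty)$ at $x = 4$, where $\phi(4) = 1 - \log 2 > 0$; hence $\phi > 0$ throughout $[2,\infty)$. For the small cases $m = 2,3$ one has $\lfloor m/2\rfloor = 1$, so the reciprocal sum is just $1 < \sqrt m$ and the harmonic estimate is not even needed. An alternative to the harmonic/logarithm route is to use $\sum_{d\divides m} 1/d = \sigma(m)/m$ together with the classical fact that $\sigma(m)/m$ grows only like $\log\log m$, but the self-contained ``subset of $\{1,\dots,\lfloor m/2\rfloor\}$'' argument is cleaner and avoids quoting Gronwall's theorem.
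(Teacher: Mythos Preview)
Your proof is correct. Both your argument and the paper's open identically --- bound $n^{d/m}\le\sqrt n$ via $d/m\le 1/2$ --- and both then effectively reduce to showing $\sum_{d\mid m,\,d<m} 1/d < \sqrt m$, after which $\sqrt n\cdot\sqrt m\le n$ finishes. The paper handles the reciprocal sum via the divisor-count inequality $\tau(m)\le\sqrt{3m}$, writing $\sum 1/d < \tau(m)/2 \le \sqrt{3m}/2 < \sqrt m$; you instead dominate it by the harmonic number $H_{\lfloor m/2\rfloor}\le 1+\log(m/2)$ and close with a one-variable calculus minimisation. Your route is a touch longer but in fact more careful: the paper's intermediate step $\sum_d \tfrac{\sqrt n}{d} < \sum_d \tfrac{\sqrt n}{2}$ is not valid term-by-term at $d=1$ (always a proper divisor when $m\ge 2$), so as written that chain needs a small patch, whereas yours goes through cleanly. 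Your opening remark that a bare (number of terms)$\times$(largest term) bound is too crude to reach the full range $n\ge m$ is precisely the issue lurking there.
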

\begin{proof}
	We denote by $\tau(m)$ the number of divisors of $m$. From elementary number theory (see for example, the top of page 114 in \cite{LeVeque}), 
	$\tau(m) \leq \sqrt{3m}$. 
	Therefore,
	\[\begin{aligned}
	\sum_{\substack{d < m\\ d \divides m}} \frac{n^{d/m}}{d} &\leq \sum_{\substack{d < m\\ d \divides m}} \frac{n^{1/2}}{d}
	< \sum_{\substack{d < m\\ d \divides m}} \frac{n^{1/2}}{2} \leq \tau(m) \frac{n^{1/2}}{2}\\
	&\leq \frac{\sqrt{3m}\sqrt{n}}{2} < \sqrt{m}\sqrt{n} \leq \sqrt{n}\sqrt{n} = n.
	\end{aligned}
	\]
\end{proof}

\begin{lemma}
	\label{lem:bounding g(n) above and below}
	Let $f$ and $K = K_m$ be as in Corollary~\ref{cor:Muller's asymptotic formula - cyclic groups}, and let $g(n) = nf(n)$. Let $\delta = 1 - 1/m$. 
	Then
	\[
	  Kn \left(\frac{n}{e}\right)^{\delta n} < g(n) < K n \left(\frac{n}{e}\right)^{\delta n} e^{n}.
	\]
\end{lemma}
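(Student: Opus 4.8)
The plan is to unwind the definition $g(n) = nf(n)$ and observe that both claimed inequalities collapse to a clean statement about the single exponential factor coming from the sum inside $f$. Writing $\delta = 1 - 1/m$ and $h(n) := \sum_{d < m,\, d \mid m} n^{d/m}/d$, I would substitute the formula for $f$ from Corollary~\ref{cor:Muller's asymptotic formula - cyclic groups} to get
\[
g(n) = n f(n) = K n \cdot n^{\delta n} \exp\!\big(-\delta n + h(n)\big).
\]
The key algebraic observation is then $n^{\delta n} e^{-\delta n} = (n/e)^{\delta n}$, so that
\[
g(n) = K n \left(\frac{n}{e}\right)^{\delta n} e^{h(n)}.
\]
Once $g(n)$ is in this form, the two desired inequalities say exactly that $1 < e^{h(n)} < e^{n}$, i.e.\ that $0 < h(n) < n$, after dividing through by the common positive factor $K n (n/e)^{\delta n}$.

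For the lower bound, I would note that since $m \geq 2$ the divisor $d = 1$ satisfies $1 < m$, so $h(n)$ contains the strictly positive term $n^{1/m}$; as every summand is positive, $h(n) > 0$ and hence $e^{h(n)} > 1$. For the upper bound, I would simply invoke Lemma~\ref{lem:sum n^(d/m)/d < n}, which gives $h(n) < n$, whence $e^{h(n)} < e^{n}$. Multiplying both bounds back through by $K n (n/e)^{\delta n}$ yields the two inequalities of the lemma.

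There is no genuine obstacle here: the entire content is the algebraic rearrangement that isolates the factor $e^{h(n)}$, together with the already-proved estimate of Lemma~\ref{lem:sum n^(d/m)/d < n}. The only subtlety worth flagging is the implicit range of $n$: the upper bound relies on Lemma~\ref{lem:sum n^(d/m)/d < n}, which is stated for $n \geq m$, whereas the lower bound holds for all $n \geq 1$. I would therefore take $n \geq m$ (or ``for all large $n$'') as the operative hypothesis, consistent with how this lemma is applied later.
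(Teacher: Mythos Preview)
Your proposal is correct and follows essentially the same argument as the paper: both rewrite $g(n)$ as $Kn(n/e)^{\delta n}e^{h(n)}$, note that $h(n)>0$ for the lower bound, and invoke Lemma~\ref{lem:sum n^(d/m)/d < n} for the upper bound. Your extra remarks about the implicit hypothesis $n\geq m$ are a valid clarification but not a departure from the paper's approach.
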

\begin{proof}
	We have that 
	\[
	  g(n) = Kn \left(\frac{n}{e}\right)^{\delta n} \exp(\sum_{\substack{d < m\\ d \divides m}} \frac{n^{d/m}}{d}).
	\]
	So the lower bound on $g(n)$ follows. The upper bound follows from Lemma~\ref{lem:sum n^(d/m)/d < n}.
\end{proof}

\begin{lemma}
	\label{lem:fraction in terms of g(n) approaches 0}
	Let $f$ and $K = K_m$ be as in Corollary~\ref{cor:Muller's asymptotic formula - cyclic groups}, and let $g(n) = nf(n)$. Then
	\[
	  \frac{n3^{2n/3}g(n/2)}{g(n)} \longrightarrow 0.
	\]
\end{lemma}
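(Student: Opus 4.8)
The plan is to estimate the ratio $\dfrac{n3^{2n/3}g(n/2)}{g(n)}$ directly using the explicit two-sided bounds on $g$ from Lemma~\ref{lem:bounding g(n) above and below}. Writing $\delta = 1 - 1/m$, that lemma gives
\[
  Kn\left(\frac{n}{e}\right)^{\delta n} < g(n) < Kn\left(\frac{n}{e}\right)^{\delta n} e^{n}.
\]
First I would apply the \emph{upper} bound to the numerator's factor $g(n/2)$ and the \emph{lower} bound to the denominator $g(n)$. This yields
\[
  \frac{n3^{2n/3}g(n/2)}{g(n)}
  < \frac{n \cdot 3^{2n/3} \cdot K\tfrac{n}{2}\left(\frac{n/2}{e}\right)^{\delta n/2} e^{n/2}}{Kn\left(\frac{n}{e}\right)^{\delta n}}.
\]
The constants $K$ and one factor of $n$ cancel, leaving a manageable product of an elementary polynomial factor, the exponential factors $3^{2n/3}e^{n/2}$, and the ratio of the two power-type terms.

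The heart of the argument is then to show that the dominant contribution, namely the ratio of powers $\left(\tfrac{n/2}{e}\right)^{\delta n/2}$ over $\left(\tfrac{n}{e}\right)^{\delta n}$, decays super-exponentially and thereby overwhelms the $3^{2n/3}e^{n/2}$ growth. I would take logarithms to make this transparent: up to lower-order terms the log of the power ratio is
\[
  \frac{\delta n}{2}\log\!\frac{n/2}{e} - \delta n \log\!\frac{n}{e}
  = -\frac{\delta n}{2}\log n + O(n),
\]
which is of order $-\tfrac{\delta n}{2}\log n$ and hence tends to $-\infty$ faster than any linear term in $n$. Since $\log(n 3^{2n/3} e^{n/2})$ is only $O(n)$, the logarithm of the entire ratio tends to $-\infty$, so the ratio tends to $0$.

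Concretely, the cleanest write-up is to bound the whole expression above by something of the form $\dfrac{1}{n^{cn}}$ (with $c>0$ depending on $\delta$) for all large $n$: after collecting the $n^{\delta n/2}/n^{\delta n} = n^{-\delta n/2}$ factor, every remaining factor is at most $e^{Cn}$ for a fixed constant $C$, and $n^{-\delta n/2} e^{Cn} = \exp\bigl(-\tfrac{\delta n}{2}\log n + Cn\bigr) \to 0$ since $\tfrac{\delta n}{2}\log n$ eventually dominates $Cn$. I do not anticipate a genuine obstacle here: the bounds in Lemma~\ref{lem:bounding g(n) above and below} are exactly tailored to reduce this to the comparison of $n^{\delta n/2}$ against a bare exponential, which is elementary. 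The only point requiring a little care is bookkeeping of the constants — making sure that the $e^{n}$ slack in the upper bound for $g(n/2)$, together with $3^{2n/3}$, stays subordinate to the $n^{-\delta n/2}$ decay, which it clearly does because the latter is super-exponential while the former are merely exponential.
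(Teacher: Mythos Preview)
Your proposal is correct and follows essentially the same route as the paper: both apply the upper bound from Lemma~\ref{lem:bounding g(n) above and below} to $g(n/2)$ and the lower bound to $g(n)$, then observe that the resulting $n^{-\delta n/2}$ factor beats the leftover exponential terms. The only cosmetic difference is that the paper first replaces $n3^{2n/3}$ by the cruder $3^n$ and carries the arithmetic out explicitly to $\exp\bigl(-(\delta/2)n\log n + Cn\bigr)$, whereas you keep $n3^{2n/3}$ and phrase the final step via logarithms and $O(n)$ bookkeeping.
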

\begin{proof}
	We will show that
	\[
	\frac{3^n g(n/2)}{g(n)} \longrightarrow 0.
	\]
	Let $f_0(n) = Kn \left(\frac{n}{e}\right)^{\delta n}$ and $f_1(n) = K n \left(\frac{n}{e}\right)^{\delta n} e^{n}$. 
	By Lemma~\ref{lem:bounding g(n) above and below},
	\[
	  \frac{3^ng(n/2)}{g(n)} < \frac{3^nf_1(n/2)}{f_0(n)}.
	\]
	So we will just show that $3^nf_1(n/2)/f_0(n) \longrightarrow 0$. Indeed,
	\[\begin{aligned}
	 \frac{3^nf_1(n/2)}{f_0(n)} &= \frac{3^n K \frac{n}{2} \left(\frac{n}{2e}\right)^{\delta n/2} e^{n/2}}{Kn\left(\frac{n}{e}\right)^{\delta n}}
	= \frac{e^{n\log 3} \left(\frac{1}{2}\right)^{\delta n /2 + 1} \left(\frac{n}{e}\right)^{\delta n/2} e^{n/2}}{\left(\frac{n}{e}\right)^{\delta n}}\\
	&< \frac{e^{n\log3 +n/2}}{\left(\frac{n}{e}\right)^{\delta n/2}} = \frac{e^{n\log3 +n/2 + \delta n/2}}{e^{(\delta /2)n\log n}}
	= \frac{1}{e^{(\delta /2) n \log n - (\log 3 + 1/2 + \delta /2)n}},
	\end{aligned}
	\]
	which goes to 0.
\end{proof}

Given Lemma \ref{lem:bounding subgr(G) above and below by g(n)}, the purpose of 
Lemma~\ref{lem:h(n) bounded in terms of g(n) implies h(n) is asymptotic to g(n)} should be clear.
\begin{lemma}
	\label{lem:h(n) bounded in terms of g(n) implies h(n) is asymptotic to g(n)}
	Let $f$ be as in Corollary~\ref{cor:Muller's asymptotic formula - cyclic groups}, and let $g(n) = nf(n)$. Suppose
	$h : \mathbb{N} \to \mathbb{N}$ is such that for all $\varepsilon > 0$, for all large $n$,
	\[
	  (1 - \varepsilon)g(n) \leq h(n) \leq (1 + \varepsilon)g(n) + (1 + \varepsilon)ng(n/2) 3^{2n/3}.
	\]
	Then $h(n) \sim g(n)$.
\end{lemma}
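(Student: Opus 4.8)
The plan is to run a straightforward sandwich (squeeze) argument: divide the hypothesized two-sided bound on $h(n)$ through by $g(n)$ and then invoke Lemma~\ref{lem:fraction in terms of g(n) approaches 0} to render the error term negligible. Since $g(n) = nf(n) > 0$ for all large $n$, dividing is legitimate, and the displayed hypothesis becomes
\[
(1 - \varepsilon) \leq \frac{h(n)}{g(n)} \leq (1 + \varepsilon) + (1 + \varepsilon)\,\frac{n\,g(n/2)\,3^{2n/3}}{g(n)}
\]
for all large $n$.

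First I would fix $\varepsilon > 0$. The hypothesis supplies a threshold $N_1$ beyond which the two-sided bound on $h(n)$ holds, and Lemma~\ref{lem:fraction in terms of g(n) approaches 0} supplies a threshold $N_2$ beyond which $n\,g(n/2)\,3^{2n/3}/g(n) < \varepsilon$. For $n \geq \max(N_1, N_2)$ the correction term is then controlled, and the display collapses to
\[
1 - \varepsilon \leq \frac{h(n)}{g(n)} \leq (1 + \varepsilon) + (1 + \varepsilon)\varepsilon = (1 + \varepsilon)^2 .
\]

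Taking $\liminf$ and $\limsup$ as $n \to \infty$ yields $\liminf_n h(n)/g(n) \geq 1 - \varepsilon$ and $\limsup_n h(n)/g(n) \leq (1 + \varepsilon)^2$. Because $\varepsilon > 0$ was arbitrary, letting $\varepsilon \to 0$ forces $\liminf_n h(n)/g(n) \geq 1$ and $\limsup_n h(n)/g(n) \leq 1$, so the ratio converges to $1$; that is, $h(n) \sim g(n)$.

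There is essentially no obstacle here. The only point requiring care is the order of quantifiers: the hypothesis has the form ``for every $\varepsilon$, for all large $n$,'' so one must fix $\varepsilon$ first, choose the thresholds $N_1, N_2$ depending on it, and only send $\varepsilon \to 0$ at the very end — rather than attempting to take both limits at once. All the genuine analytic content, namely that the correction term $n\,g(n/2)\,3^{2n/3}/g(n)$ vanishes in the limit, has already been isolated and discharged in Lemma~\ref{lem:fraction in terms of g(n) approaches 0}, so what remains is purely bookkeeping.
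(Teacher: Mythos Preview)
Your proposal is correct and is essentially identical to the paper's own argument: both divide through by $g(n)$, invoke Lemma~\ref{lem:fraction in terms of g(n) approaches 0} to kill the correction term, and then let $\varepsilon \to 0$. The only cosmetic difference is that the paper bounds the right-hand side by $1 + 2\varepsilon$ where you use $(1+\varepsilon)^2$, which is immaterial.
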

\begin{proof}
	Fix $\varepsilon > 0$. For all large $n$, we have that
	\[
	1 - \varepsilon = \frac{(1-\varepsilon)g(n)}{g(n)} \leq \frac{h(n)}{g(n)} \leq \frac{(1+\varepsilon)g(n) + (1+\varepsilon)n3^{2n/3}g(n/2)}{g(n)},
	\]
	which by Lemma \ref{lem:fraction in terms of g(n) approaches 0}, approaches $1 + \varepsilon$. So we have shown that for all $\varepsilon > 0$,
	for all large $n$,
	\[
	  1 - \varepsilon \leq \frac{h(n)}{g(n)} \leq 1 + 2\varepsilon.
	\]
	Therefore, $h(n) \sim g(n)$.
\end{proof}

\begin{theorem}
	\label{thm:baumslag-solitar - subgroup growth}
	Let $m = \gcd(a,b) > 1$. 
	Then
	\[
	\subgr(\Gab{a}{b}) \sim K_m n^{(1-1/m)n + 1} \exp\left(-(1 - 1/m)n + \sum_{\substack{d < m \\ d|m}} \frac{n^{d/m}}{d} \right),
	\]
	where
	\[
	K_m :=
	\begin{cases}
	m^{-1/2} & \text{if $m$ is odd}\\
	m^{-1/2}e^{-1/(2m)} & \text{otherwise.}
	\end{cases}
	\]
\end{theorem}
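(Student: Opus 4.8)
The plan is to recognize that at this stage essentially all of the analytic work has already been carried out, and that the theorem follows by assembling the preceding lemmas. First I would pass to the largest residually finite quotient: since the finite-index subgroups of $\Gab{a}{b}$ are in bijection with those of $\Gbar$, we have $\subgr(\Gab{a}{b}) = \subgr(\Gbar)$, so it suffices to control the subgroup growth of $\Gbar$.

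Next, with $f$ as in Corollary~\ref{cor:Muller's asymptotic formula - cyclic groups} and $g(n) := nf(n)$, I would invoke Lemma~\ref{lem:bounding subgr(G) above and below by g(n)}, which supplies, for every $\varepsilon > 0$ and all large $n$, the two-sided estimate
\[
(1 - \varepsilon)g(n) \leq \subgr(\Gab{a}{b}) \leq (1 + \varepsilon)g(n) + (1 + \varepsilon)ng(n/2)3^{2n/3}.
\]
These are exactly the hypotheses of Lemma~\ref{lem:h(n) bounded in terms of g(n) implies h(n) is asymptotic to g(n)} applied to $h(n) := \subgr(\Gab{a}{b})$, so that lemma immediately yields $\subgr(\Gab{a}{b}) \sim g(n) = nf(n)$. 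Finally, I would unwind the definition of $f$: absorbing the leading factor $n$ into $n^{(1-1/m)n}$ converts $nf(n)$ into $K_m n^{(1-1/m)n + 1}\exp(\cdots)$, which is precisely the claimed formula.

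The genuinely hard part of this argument lies not in the theorem statement but in the supporting lemmas, where it has already been dispatched: namely, the fact that the overwhelming majority of finite-index subgroups of $\Gab{a}{b}$ contain the normal subgroup $A \cong \integers[1/(uv)]$, so that the quotient $\Gbar/A \cong \integers * \integers/m\integers$ governs the asymptotics through Corollary~\ref{cor:subgr(Z * Z/mZ) is asymptotic to nf(n)}. The contribution of the subgroups \emph{not} containing $A$ is controlled by the derivation bound $\lvert\Der(G_0/A, A/A_0)\rvert \leq 3^{2n/3}$ from Lemma~\ref{lem:der(G_0/A,A/A_0) is bounded above by 3^(2n/3)}, and the key point that the resulting error term $ng(n/2)3^{2n/3}$ is negligible against $g(n)$ is exactly Lemma~\ref{lem:fraction in terms of g(n) approaches 0}. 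Consequently, with those lemmas in hand, the only remaining care is to package the $\varepsilon$'s in the two-sided bound correctly, which is the content of Lemma~\ref{lem:h(n) bounded in terms of g(n) implies h(n) is asymptotic to g(n)}; no further obstacle remains, and the proof is a short assembly.
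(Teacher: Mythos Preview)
Your proposal is correct and follows essentially the same route as the paper: invoke Lemma~\ref{lem:bounding subgr(G) above and below by g(n)} to get the two-sided estimate and then apply Lemma~\ref{lem:h(n) bounded in terms of g(n) implies h(n) is asymptotic to g(n)} to conclude $\subgr(\Gab{a}{b}) \sim g(n) = nf(n)$. Your additional explanatory paragraph accurately summarizes where the real work was done in the preceding lemmas, but the proof itself is just the short assembly you describe, exactly as in the paper.
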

\begin{proof}
	Let $f$ be as in Corollary~\ref{cor:Muller's asymptotic formula - cyclic groups}, and let $g(n) = nf(n)$. The statement of this theorem is that
	$\subgr(\Gab{a}{b}) \sim g(n)$. By Lemma~\ref{lem:bounding subgr(G) above and below by g(n)}, we have that for all $\varepsilon > 0$, for all
	large $n$,
	\[
	  (1 - \varepsilon)g(n) \leq \subgr(G) \leq (1 + \varepsilon)g(n) + (1 + \varepsilon)ng(n/2) 3^{2n/3}.
	\]
	The theorem then follows by Lemma \ref{lem:h(n) bounded in terms of g(n) implies h(n) is asymptotic to g(n)}.
\end{proof}

\begin{corollary}
	Let $m = \gcd(a,b) > 1$. Then $\maxsubgr(\Gab{a}{b}) \sim \subgr(\Gab{a}{b})$.
\end{corollary}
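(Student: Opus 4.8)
The plan is to observe that this corollary is an immediate consequence of the two preceding theorems, since both produce the \emph{same} closed-form asymptotic expression. Write
\[
F(n) := K_m\, n^{(1-1/m)n + 1} \exp\left(-(1 - 1/m)n + \sum_{\substack{d < m \\ d|m}} \frac{n^{d/m}}{d} \right).
\]
Theorem~\ref{thm:baumslag-solitar - maximal subgroup growth} states exactly that $\maxsubgr(\Gab{a}{b}) \sim F(n)$, and Theorem~\ref{thm:baumslag-solitar - subgroup growth} states exactly that $\subgr(\Gab{a}{b}) \sim F(n)$. So the only thing to invoke is the transitivity (and symmetry) of the relation $\sim$: if $f(n) \sim F(n)$ and $g(n) \sim F(n)$, then $f(n) \sim g(n)$.

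Concretely, I would first note that $F(n) > 0$ for all large $n$, since $K_m > 0$ and every factor in $F(n)$ is positive; this guarantees the relevant ratios are well defined. Then from $\maxsubgr(\Gab{a}{b})/F(n) \to 1$ and $\subgr(\Gab{a}{b})/F(n) \to 1$ one divides to obtain
\[
\frac{\maxsubgr(\Gab{a}{b})}{\subgr(\Gab{a}{b})} = \frac{\maxsubgr(\Gab{a}{b})/F(n)}{\subgr(\Gab{a}{b})/F(n)} \longrightarrow \frac{1}{1} = 1,
\]
which is precisely the assertion $\maxsubgr(\Gab{a}{b}) \sim \subgr(\Gab{a}{b})$.

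There is no real obstacle here; all of the genuine work has already been carried out in establishing the two theorems. The substantive content lies in having shown (via Lemma~\ref{lemma:baumslag-solitar - polynomial bound on the maximal subgroups of complent type} for the maximal count, and via Lemmas~\ref{lem:bounding subgr(G) above and below by g(n)} through \ref{lem:h(n) bounded in terms of g(n) implies h(n) is asymptotic to g(n)} for the total count) that the subgroups not containing $A$ contribute negligibly in both cases, so that each growth function collapses to $nf(n)$, and hence to $F(n)$. Once both theorems are in hand, the corollary is a formal deduction requiring nothing beyond the transitivity of asymptotic equivalence.
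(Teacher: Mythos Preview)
Your proof is correct and matches the paper's own argument exactly: the paper simply says the corollary follows from Theorems~\ref{thm:baumslag-solitar - maximal subgroup growth} and~\ref{thm:baumslag-solitar - subgroup growth}, which is precisely the transitivity-of-$\sim$ observation you spell out.
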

\begin{proof}
	This follows from Theorems \ref{thm:baumslag-solitar - maximal subgroup growth} and \ref{thm:baumslag-solitar - subgroup growth}.
\end{proof}

\bibliography{my_bib}{}
\bibliographystyle{plain}

\textsc{Department of Mathematics and Computer Science, Colorado College,
	Colorado Springs, Colorado 80903}\par\nopagebreak
\textit{email address}: \texttt{akelley@coloradocollege.edu}\footnote{The author is a Visiting Assistant Professor of Mathematics at Colorado College.
A possibly more permanent email address is \texttt{akelley2500@gmail.com} }

\end{document}